\newcommand{\cH}{\mathcal{H}}
\newtheorem{thm}{Theorem}[section]
\newtheorem{cor}[thm]{Corollary}
\newtheorem{prop}[thm]{Proposition}
\theoremstyle{definition}
\newtheorem{defn}{Definition}[section]
\theoremstyle{Conjecture}
\theoremstyle{remark}
\newtheorem{rem}{Remark}[section]
\theoremstyle{Example}
\newtheorem{exm}{Example}[section]
\newcommand{\be}{\begin{equation}}
\newcommand{\ee}{\end{equation}}
\newcommand{\bea}{\begin{eqnarray}}
\newcommand{\eea}{\end{eqnarray}}
\newcommand{\ben}{\begin{eqnarray*}}
\newcommand{\een}{\end{eqnarray*}}
\newcommand{\bet}{\begin{equation}
\begin{split}}
\newcommand{\eet}{\end{split}
\end{equation}}
\begin{document}
\title[Properties of squeezing functions]{Properties of squeezing functions and global transformations of bounded domains}
\date{}
\subjclass[2010]{32H02, 32F45}
\thanks{\emph{Key words}. squeezing function, homogeneous regular domain, globally strongly convex boundary point}
%\thanks{*Corresponding author}
%\thanks{Project supported by NSFC(Grant No. 10901152 and 11001148).}
\author[F. Deng]{Fusheng Deng}
\address{F. Deng: School of Mathematical Sciences, University of
Chinese Academy of Sciences, Beijing 100049, China}
\email{fshdeng@ucas.ac.cn}
\author[Q. Guan]{Qi'an Guan}
\address{Q. Guan: Beijing International Center for Mathematical Research, Peking University, Beijing, 100871, China}
\email{guanqian@amss.ac.cn}
\author[L. Zhang]{Liyou Zhang}
\address{L. Zhang: School of Mathematical Sciences, Capital Normal University, Beijing 100048, China}
\email{zhangly@mail.cnu.edu.cn}
\begin{abstract}
The central purpose of the present paper is to study boundary behavior
of squeezing functions on bounded domains.
We prove that the squeezing function of a strongly pseudoconvex
domain tends to 1 near the boundary.
In fact, such an estimate is proved for the squeezing function
on any domain near its globally strongly convex boundary points.
We also study the stability of squeezing functions on a sequence of bounded domains,
and give comparisons of intrinsic measures and metrics
on bounded domains in terms of squeezing functions.
As applications,
we give new and simple proofs of several well known
results about geometry of strongly pseudoconvex domains,
and prove that all Cartan-Hartogs domains are homogenous regular.
Finally, some related problems that ask for further study
are proposed.
\end{abstract}
\maketitle

\section{introduction}\label{sec:introduction}
In a recent work \cite{DGZ},
the authors introduced the notion of
squeezing functions to study geometric
and analytic properties of bounded domains.
The squeezing function of a bounded domain $D$
is defined as follows:
\begin{defn}\label{def:squeezing function}
Let $D$ be a bounded domain in $\mathbb{C}^n$.
For $z\in D$ and an (open) holomorphic embedding
$f: D\rightarrow B^n$ with $f(z)=0$,
we define
$$s_D(z , f)= \sup\{r|B^n(0,r)\subset f(D) \},$$
and the squeezing number $s_D(z)$ of $D$ at $z$
is defined as
$$s_D(z)= \sup_f\{s_D(z , f) \},$$
where the supremum is taken over all holomorphic
embeddings $f: D\rightarrow B^n$ with $f(z)=0$,
$B^n$ is the  unit ball in $\mathbb{C}^n$,
and $B^n(0 , r)$ is the ball in $\mathbb{C}^n$
with center $0$ and radius $r$.
As $z$ varies, we get a function $s_D$ on $D$,
which is called the \emph{squeezing function} of $D$.
\end{defn}

Roughly speaking,
$s_D(z)$ describes how does the domain $D$ look like the unit ball,
observed at the point $z$.
By definition, it is clear that squeezing functions are
invariant under biholomorphic transformations.
Namely,
if $f:D_1\rightarrow D_2$ is a holomorphic equivalence
of two bounded domains, then $s_{D_2}\circ f=s_{D_1}$.
Though the definition is so simple,
it is turned out that so many geometric and analytic properties
of  bounded domains are encoded in their squeezing functions.

Some interesting properties of squeezing functions
were established in \cite{DGZ}.
For example,
squeezing functions are continuous;
and for each $p\in D$,
there exists an extremal map realizing the supremum
in Definition \ref{def:squeezing function}.

In the present paper,
we continue to study squeezing functions and
their applications to geometry of bounded domains.
We first consider the stability of squeezing functions
on a sequence of domains.
We prove that,
for a sequence of increasing domains
convergent to a bounded domain,
the squeezing functions of these domains
converge to the squeezing function
of the limit domain.
We also prove a weaker result for
a sequence of decreasing domains.

A \emph{homogenous regular domain} (introduced in \cite{Liu})
is a bounded domain whose squeezing function
is bounded below by a positive constant.
By the famous Bers embedding (\cite{Bers}),
Teichm\"uller spaces of compact Riemann surfaces
are homogenous regular.
In the past decade,
comparisons of various intrinsic metrics
on Teichm\"uller spaces were extensively
studied (see e.g. \cite{Chen}\cite{Liu}\cite{Liu2}\cite{Yeung1}).
The equivalence of certain intrinsic measures
on Teichm\"uller spaces was proved in \cite{Overholser}.
In \cite{Liu},
it was proved that the Bergman metric,
the Kobayashi metric, and the Carath\'eodory metric
on a homogenous regular domain are equivalent.
Geometric and analytic properties of homogenous
regular domains were systematically studied in \cite{Yeung},
where the term homogenous regular domain was phrased
as uniformly squeezing domain.
In this paper,
we modify the methods in \cite{Overholser} and \cite{Yeung} to
give comparisons of intrinsic measures
and metrics on general bounded domains
in terms of squeezing functions.

The central problem in the theory is
studying boundary behavior of squeezing functions.
For a smoothly bounded planar domain $D$,
it was shown in \cite{DGZ} that
$\lim_{z\rightarrow \partial D}s_D(z)=1$.
In this paper, we try to generalize
this result to strongly pseudoconvex domains of higher dimensions.
To state the main results,
we first introduce the notion
of globally strongly convex boundary points.
Let $D$ be a bounded domain in $\mathbb{C}^n$
and $p\in\partial D$.
We call $p$ a \emph{globally strongly convex} (g.s.c)
boundary point of $D$ if $\partial D$ is $C^2$-smooth
and strongly convex at $p$, and $\bar D\cap T_p\partial D=\{p\}$,
where $T_p\partial D$ is the tangent hyperplane of
$\partial D$ at $p$.
In the present paper,
we will prove the following result:

\begin{thm}\label{thm:g.s.c}
Let $D\subset\mathbb{C}^n$ be a bounded domain.
Assume $p\in\partial D$ is a g.s.c
boundary point of $D$.
Then $\lim_{z\rightarrow p}s_D(z)=1.$
\end{thm}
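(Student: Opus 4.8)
The plan is to combine a parabolic (Pinchuk-type) rescaling near $p$ with the stability of squeezing functions under convergence of domains. Since squeezing functions are invariant under affine maps, I may assume after a complex-affine change of coordinates that $p=0$, that $T_p\partial D=\{\operatorname{Re}w_1=0\}$, and that $D\subseteq H:=\{\operatorname{Re}w_1>0\}$ with inner normal $e_1=(1,0,\dots,0)$; the hypothesis $\bar D\cap T_p\partial D=\{p\}$, together with connectedness and boundedness of $D$, is exactly what forces $D\subseteq H$. Strong convexity at $p$ together with $C^2$-smoothness give a fixed neighborhood $V$ of $p$ and a uniform $R_0>0$ such that: (i) for every $q\in\partial D\cap V$, $D\cap V$ lies in the ball $B^n(q+R_0\nu(q),R_0)$ tangent to $\partial D$ at $q$; and (ii) $\partial D\cap V=\{\operatorname{Re}w_1=\varphi_q(\operatorname{Im}w_1,w')\}$ near $q$, with $\varphi_q=Q_q+o(|\cdot|^2)$ for a positive-definite real quadratic form $Q_q$ depending continuously on $q$ and $Q_q\to Q_p$ as $q\to p$.

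Next I single out the scaling limit. Let $Q_p'$ be the restriction of $Q_p$ to the $w'$-variables; it is a positive-definite real quadratic form on $\mathbb{C}^{n-1}$, so its $U(1)$-average is a positive-definite Hermitian form $h$ and $Q_p'(w')=h(w',\bar w')+\operatorname{Re}B(w',w')$ for some symmetric bilinear $B$. Put $M:=\{\operatorname{Re}w_1>Q_p'(w')\}$. A complex-linear change of the $w'$-variables turning $h$ into $|w'|^2$, followed by the polynomial shear $w_1\mapsto w_1-B(w',w')$, carries $M$ onto $\{\operatorname{Re}w_1>|w'|^2\}\cong B^n$; hence $M\cong B^n$ and $s_M\equiv1$.

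Now fix a sequence $z_j\to p$ in $D$. Let $t_j>0$ be least with $z_j-t_je_1\in\partial D$ and set $\pi_j:=z_j-t_je_1\in\partial D$; since $D\subseteq H$ we have $0<t_j<\operatorname{Re}(z_j)_1\to0$, so $\pi_j\to p$. Choose a unitary $R_j$ with $R_j\nu(\pi_j)=e_1$ (so $R_j\to\operatorname{id}$), let $\delta_s(w_1,w')=(s^{-2}w_1,s^{-1}w')$, and put
$$
\Phi_j:=\delta_{\sqrt{t_j}}\circ R_j\circ(\,\cdot-\pi_j\,),
$$
a complex-affine automorphism of $\mathbb{C}^n$, so that $s_D(z_j)=s_{\Phi_j(D)}(\Phi_j(z_j))$. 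Because $z_j-\pi_j=t_je_1$ exactly, $\Phi_j(z_j)\to e_1=:w_*\in M$. Using $Q_{\pi_j}\to Q_p$ and Taylor's theorem one checks that the rescaled local defining functions $t_j^{-1}\varphi_{\pi_j}(t_j\operatorname{Im}w_1,\sqrt{t_j}\,w')$ converge locally uniformly to $Q_p'(w')$ — the $(\operatorname{Im}w_1)^2$-term and the $o(|\cdot|^2)$-remainder carry surplus powers of $t_j$, and the tilt of $T_{\pi_j}\partial D$ has been removed by $R_j$ — so $\Phi_j(D)$ converges to $M$ in the Carath\'eodory-kernel sense with base point $w_*$; moreover $\Phi_j$ sends the outer ball in (i) to the fixed ball $B^n(R_0e_1,R_0)$, whose $\delta_{\sqrt{t_j}}$-dilates increase to $\{\operatorname{Re}w_1>|w'|^2/(2R_0)\}\cong B^n$, which controls $\Phi_j(D)$ near $w_*$ from above. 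By the stability of squeezing functions under this convergence (our results on sequences of domains), $s_D(z_j)=s_{\Phi_j(D)}(\Phi_j(z_j))\to s_M(w_*)=1$. As $s_D\le1$ always and $z_j\to p$ was arbitrary, this gives $\lim_{z\to p}s_D(z)=1$.

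The main obstacle is the rescaling step: choosing the transversal direction and translation so that $\Phi_j(z_j)$ lands and stays in a fixed compact subset of the interior of $M$ for \emph{every} mode of approach $z_j\to p$, including tangential ones; controlling the defining functions $\varphi_{\pi_j}$ uniformly near the moving base point $\pi_j$ from $C^2$-smoothness alone; and — for the stability step — ensuring that the far part $\Phi_j(D\setminus V)$, which escapes to infinity under the parabolic dilations, does not spoil $s_{\Phi_j(D)}(\Phi_j(z_j))$ in the limit, using the uniform local outer ball in (i) rather than the global enclosing ball $B^n(c,A)$, the latter being destroyed by $\delta_{\sqrt{t_j}}$. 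The global hypothesis $\bar D\cap T_p\partial D=\{p\}$ enters precisely to force $D\subseteq H$, hence $t_j\to0$ and $\pi_j\to p$; without it the transversal line through $z_j$ could first meet $\partial D$ far from $p$.
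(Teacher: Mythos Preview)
Your approach via Pinchuk-type parabolic rescaling is natural but genuinely different from the paper's route. The paper does not rescale directly: it quotes the Fridman--Ma construction \cite{F-M} for the single statement that, after a fixed finite chain of explicit global biholomorphisms (the Steps~1--3 reproduced in the proof), the transformed domain $D'$ satisfies $s_{D'}(-r,0,\dots,0)\to 1$ as $r\to 0^+$ along the inner normal at $p$. It then shows these transformations depend smoothly on the nearby boundary point $q$, invokes the upper semi-continuity of $e_D$ (Proposition~\ref{prop:semi-conti. of B_D}) for uniformity, and uses the inverse function theorem so that the resulting curves $l_q^-$ sweep out a full neighbourhood of $p$ in $\bar D$. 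No stability theorem for squeezing functions is used.

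The gap in your proposal is exactly the stability step. Theorems~\ref{thm: increase limit domain} and \ref{thm: decrease limit domain} apply only to \emph{monotone} sequences; your $\Phi_j(D)$ converge to $M$ only in the Carath\'eodory-kernel sense, and squeezing functions are not continuous under kernel convergence alone. Concretely: to get $s_{\Phi_j(D)}(\Phi_j(z_j))\ge 1-\epsilon$ you need an injective holomorphic $g_j:\Phi_j(D)\to B^n$ with $g_j(\Phi_j(z_j))=0$ and $B(0,1-\epsilon)\subset g_j(\Phi_j(D))$. The biholomorphism $\chi:M\to B^n$ is defined only on $M$ and $\Phi_j(D)\not\subset M$, so it cannot serve as $g_j$. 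Your proposed outer control---the dilated local outer ball increasing to $E_\infty=\{\operatorname{Re}w_1>|w'|^2/(2R_0)\}$---is strictly larger than $M$ whenever $Q_p'\ne |\cdot|^2/(2R_0)$ (i.e.\ unless $\partial D$ is spherical to second order at $p$), so squeezing via any fixed $\psi:E_\infty\to B^n$ maps $M$ to a \emph{proper} subdomain of $B^n$ and the image of $\Phi_j(D)$ cannot contain balls of radius close to $1$. Worse, that outer ball controls only $\Phi_j(D\cap V)$; the far part $\Phi_j(D\setminus V)$ lies merely in a paraboloid $\{\operatorname{Re}w_1>c_0|w'|^2/(2C^2)\}$ governed by $c_0=\inf_{D\setminus V}\operatorname{Re}z_1$, a constant fixed by the small neighbourhood $V$ and not improvable. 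You flag this as an obstacle, but the resolution you sketch does not close it: one must either produce outer domains $M_\eta\cong B^n$ with $\Phi_j(D)\subset M_\eta$ and $M_\eta\searrow M$ (and then argue carefully, since Theorem~\ref{thm: decrease limit domain} gives only the wrong inequality), or build the embeddings $g_j$ by hand as Fridman--Ma do---which is precisely the work the paper outsources to \cite{F-M}.
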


Let $D$ be a bounded strongly pseudoconvex domain
and $p\in\partial D$.
It is well known that there is a local coordinate
change near $p$ such that $\partial D$ is strongly
convex at $p$ under the new coordinates.
It is natural to ask whether the coordinate change
can be taken globally on a neighborhood of $\bar D$?
A stronger version of this question is whether we can find a coordinate
change on a neighborhood $\bar D$ such that,
under the new coordinates,
$p$ is a g.s.c boundary point of $D$.
In a recent work,
Forn{\ae}ss and Wold answered this question affirmatively
and proved the following deep result:

\begin{thm}[\cite{Fornaess-Wold}]\label{thm:h.g.s.c}
Let $D$ be a bounded  strongly pseudoconvex
domain in $\mathbb{C}^n$ with $C^2$-smooth boundary
and $p\in\partial D$.
Then there is a neighborhood $\tilde D$ of $\bar D$ and
a holomorphic (open) embedding $f:\tilde D\rightarrow \mathbb{C}^n$
such that $f(p)$ is a g.s.c boundary point
of $f(D)$.
\end{thm}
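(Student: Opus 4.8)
The plan is to follow the strategy of \emph{exposing a boundary point}: first straighten and convexify $\partial D$ near $p$ by a local holomorphic coordinate change, and then invoke an Anders\'en--Lempert type approximation to extend this to a global injective holomorphic map on a neighborhood of $\bar D$ that pushes $p$ far out of $\bar D$ along a transversal arc, arranged so that the real supporting hyperplane at the image point meets the closure of the image domain nowhere else. \textbf{Local convexification.} Since $D$ is strongly pseudoconvex with $C^2$ boundary, there are a ball $U\ni p$ and a biholomorphism $\psi\colon U\to\psi(U)$ with $\psi(p)=0$ making $\partial(\psi(D\cap U))$ strongly convex near $0$; one may moreover arrange $\overline{\psi(D\cap U)}\subset\{\operatorname{Re}\zeta_1\le-c|\zeta|^2\}$ near $0$ for some $c>0$, so that $\{\operatorname{Re}\zeta_1=0\}=T_0\partial(\psi(D\cap U))$ supports $\overline{\psi(D\cap U)}$ locally only at $0$ and the outward normal points along $+e_1$. \textbf{Escape arc.} Choose a $C^1$ embedded arc $\gamma\colon[0,1]\to\mathbb{C}^n$ with $\gamma(0)=p$ and $\gamma((0,1])\cap\bar D=\emptyset$, leaving $\partial D$ transversally along the outward normal at $p$, with endpoint $q:=\gamma(1)$ chosen far enough from $\bar D$ that $\bar D$ will lie deep on the negative side of the relevant hyperplane (the precise routing of $\gamma$ near $q$ is fixed in the last step); arrange also that $K:=\bar D\cup\gamma([0,1])$ has a neighborhood basis of Stein domains, which follows from the corresponding property of $\bar D$ together with a standard perturbation for attaching a short transversal totally real arc.

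\textbf{Exposing $p$.} By the Anders\'en--Lempert / Forstneri\v{c}--Rosay machinery, in the version allowing one to prescribe the $2$-jet at $p$, one constructs an injective holomorphic map $f$ on a neighborhood $\tilde D$ of $\bar D$ inside a Stein neighborhood of $K$ such that: $f$ is $C^0$-close to the identity on $\bar D\setminus U'$ for a small ball $U'\Subset U$ around $p$; $f$ agrees to second order at $p$ with the map $z\mapsto q+\psi(z)$, so that $f(p)=q$; and $f$ carries $\bar D\cap U'$ into a thin tube following $\gamma$ and terminating in a cap at $q$. Such a map is produced by realizing the isotopy that slides a small neighborhood of $p$ along $\gamma$ to $q$ as a composition of flows of complete holomorphic vector fields concentrated, up to small error, near $\gamma$, and approximating it by injective maps on neighborhoods of $K$. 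Since $\psi$ convexifies and $f$ agrees with $z\mapsto q+\psi(z)$ to second order at $p$, the boundary $\partial f(D)=f(\partial D)$ is $C^2$ and strongly convex at $q$, with $T_q\partial f(D)$ the image of $\{\operatorname{Re}\zeta_1=0\}$.

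\textbf{Global support.} It remains to check $\overline{f(D)}\cap T_q\partial f(D)=\{q\}$. In holomorphic coordinates $w$ centered at $q$ in which $\partial f(D)$ is strongly convex with $f(D)$ locally in $\{\operatorname{Re} w_1<0\}$, put $H:=T_q\partial f(D)=\{\operatorname{Re} w_1=0\}$. For a small ball $V\ni q$ the preimage $f^{-1}(V)$ is a small neighborhood of $p$, so $\overline{f(D)}\cap V=f\bigl(\bar D\cap f^{-1}(V)\bigr)$ is a strongly convex boundary cap and meets $H$ only at $q$. Off $f^{-1}(V)$: the part in $\bar D\setminus U'$ is carried $C^0$-close to $\bar D$, which by the choice of $q$ lies in $\{\operatorname{Re} w_1\le-R\}$ for a large $R>0$; the remaining part, contained in $U'$, lands in the thin tube following $\gamma$, which by routing $\gamma$ to approach $q$ from within $\{\operatorname{Re} w_1<0\}$ and to stay there also lies in $\{\operatorname{Re} w_1<0\}$. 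Hence $\overline{f(D)}$ meets $H$ only at $q$, i.e.\ $q=f(p)$ is a g.s.c.\ boundary point of $f(D)$.

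\textbf{Main obstacle.} The crux is the exposing step: producing a \emph{globally injective} holomorphic map on a neighborhood of $\bar D$ that realizes the macroscopic displacement of $p$ along $\gamma$ while staying uniformly close to the identity on $\bar D\setminus U'$. This is exactly where the Anders\'en--Lempert theory is needed, and two points demand real care. First, injectivity must be controlled throughout the whole approximating isotopy, not merely for the final map, which forces the thin tube to be built so that it neither self-intersects nor collides with the slightly displaced bulk of $f(\bar D)$. Second, $\bar D$ need not be polynomially convex, so one cannot simply approximate by automorphisms of $\mathbb{C}^n$; instead one works relative to a Stein neighborhood of $K$, or first arranges polynomial convexity by a preliminary global holomorphic change. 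Once such a map is in hand, the coordinate computation in the local convexification and the three-regime check above are comparatively routine.
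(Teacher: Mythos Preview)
This theorem is not proved in the present paper at all: it is stated with the attribution \cite{Fornaess-Wold} and used as a black box to deduce Theorem~\ref{thm:s.p.c}. There is therefore no ``paper's own proof'' to compare your attempt against; the authors simply quote the Forn{\ae}ss--Wold result and thank the authors for showing them the preprint.

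That said, your outline is in the spirit of the Forn{\ae}ss--Wold exposing--points argument: local convexification, a transversal escape arc, and an Anders\'en--Lempert/Forstneri\v{c}--Rosay approximation to globalize the local model while keeping injectivity. You also correctly isolate the two genuine difficulties---global injectivity along the entire isotopy, and the fact that $\bar D$ need not be polynomially convex---and note that one must work relative to a Stein neighborhood of $K=\bar D\cup\gamma$ or first arrange polynomial convexity. As written, however, your text is a strategy rather than a proof: the assertion that one may simultaneously (i) prescribe the $2$-jet at $p$, (ii) stay $C^0$-close to the identity on $\bar D\setminus U'$, and (iii) confine $f(\bar D\cap U')$ to a prescribed thin tube along $\gamma$, all while maintaining injectivity on a neighborhood of $\bar D$, is exactly the content of the Forn{\ae}ss--Wold construction and requires substantial work (building the isotopy, controlling the tube, and invoking the correct jet-interpolation version of the approximation theorem). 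Your ``global support'' step also leans on the unproved claim that the tube can be routed entirely in $\{\operatorname{Re}w_1<0\}$ while still reaching $q\in\{\operatorname{Re}w_1=0\}$; this is arrangeable but needs to be said more carefully (the tube must approach $q$ tangentially to the supporting hyperplane from the negative side). In short: the plan is right and matches the cited source, but for a self-contained proof you would need to actually carry out the exposing construction rather than invoke it.
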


Applying Theorem \ref{thm:g.s.c} and Theorem \ref{thm:h.g.s.c},
we can prove the following:

\begin{thm}\label{thm:s.p.c}
Let $D$ be a bounded  strongly pseudoconvex
domain in $\mathbb{C}^n$ with $C^2$-smooth boundary.
Then $\lim_{z\rightarrow \partial D}s_D(z)=1$.
In particular, by the continuity of squeezing functions,
$D$ is homogeneous regular.
\end{thm}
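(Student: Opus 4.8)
The plan is to reduce Theorem \ref{thm:s.p.c} to the two previous results by exploiting the biholomorphic invariance of squeezing functions. Fix a point $p\in\partial D$. By Theorem \ref{thm:h.g.s.c} there is a neighborhood $\tilde D$ of $\bar D$ and a holomorphic open embedding $f:\tilde D\to\mathbb{C}^n$ such that $f(p)$ is a g.s.c boundary point of $f(D)$. The restriction $f|_D:D\to f(D)$ is a biholomorphism, so $s_{f(D)}\circ f=s_D$ on $D$. Applying Theorem \ref{thm:g.s.c} to the domain $f(D)$ at its g.s.c boundary point $f(p)$ gives $\lim_{w\to f(p)}s_{f(D)}(w)=1$. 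Since $f$ is continuous at $p$ and carries $D$ into $f(D)$, any sequence $z_k\to p$ in $D$ satisfies $f(z_k)\to f(p)$ in $f(D)$, whence $s_D(z_k)=s_{f(D)}(f(z_k))\to 1$. This proves $\lim_{z\to p}s_D(z)=1$ for every $p\in\partial D$.

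Next I would upgrade this pointwise statement to $\lim_{z\to\partial D}s_D(z)=1$ by a compactness argument. Suppose this fails; then there exist $\varepsilon_0>0$ and a sequence $z_k\in D$ with $\mathrm{dist}(z_k,\partial D)\to 0$ and $s_D(z_k)\le 1-\varepsilon_0$. Since $\bar D$ is compact we may pass to a subsequence with $z_k\to p$ for some $p\in\partial D$; but then the previous paragraph forces $s_D(z_k)\to 1$, contradicting $s_D(z_k)\le 1-\varepsilon_0$. Hence for every $\varepsilon>0$ there is a neighborhood $U$ of $\partial D$ with $s_D>1-\varepsilon$ on $U\cap D$.

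Finally, to see that $D$ is homogeneous regular, take $U$ as above with $\varepsilon=\tfrac12$. The set $K:=\bar D\setminus U$ is a compact subset of $D$, and the squeezing function is continuous (by the result of \cite{DGZ} quoted above) and strictly positive on $D$ (since a bounded domain always admits, after translation and scaling, the inclusion into $B^n$ followed by an automorphism centering a given point); hence $s_D$ attains a positive minimum $c>0$ on $K$. Combining the two estimates, $s_D\ge\min\{c,\tfrac12\}>0$ on all of $D$, so $D$ is homogeneous regular.

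Since essentially all the analytic difficulty is absorbed into Theorems \ref{thm:g.s.c} and \ref{thm:h.g.s.c}, there is no serious obstacle in this argument; the only point requiring a little care is that the embedding $f$ and its domain $\tilde D$ produced by Theorem \ref{thm:h.g.s.c} depend on the chosen boundary point $p$, so the clean global limit $\lim_{z\to\partial D}s_D(z)=1$ must be extracted by the compactness step above rather than from a single global change of coordinates.
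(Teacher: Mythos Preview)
Your proof is correct and follows essentially the same route as the paper: the authors simply note that squeezing functions are biholomorphically invariant and then combine Theorem~\ref{thm:g.s.c} with Forn{\ae}ss--Wold's Theorem~\ref{thm:h.g.s.c}, without writing out the compactness upgrade or the homogeneous-regularity step that you have spelled out carefully. Your added detail (the subsequence argument on $\partial D$ and the positivity/continuity argument on the compact set $K$) is exactly what is implicit in the paper's one-line deduction.
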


Theorem \ref{thm:s.p.c} provides a different approach
to the famous Levi's problem,
which states that all pseudoconvex domains are holomorphically convex.
We now explain the idea.
Let $D$ be a bounded strongly pseudoconvex domain.
By Theorem \ref{thm:s.p.c}, $D$ is homogenous regular.
As shown in \cite{DGZ}, it is then easy to prove that
the Carath\'eodory metric on $D$ is complete,
which implies that $D$ is a domain of holomorphy.
Note that any pseudoconvex domain can be approximated
by an increasing sequence of bounded strongly pseudoconvex domains,
and that the limit of an increasing sequence of holomorphically
convex domains is also holomorphically convex,
we get a solution to Levi's problem.

Based on Theorem \ref{thm:s.p.c} and other results in the present paper,
we can give a different proof of
Wong's result which says that
a bounded strongly pseudoconvex domain
is holomorphic equivalent to the unit ball
if its automorphism group is noncompact,
and give new and simple proofs of some other well known
results about geometry of strongly pseudoconvex domains
(see \S \ref{subsec:geo. s.p.c domains}).

%
%As we will see, the above results can be used as powerful tools to study geometric
%and analytic properties of bounded domains.
%The key point is to estimate lower bounds of squeezing functions near boundary points.
%In this paper, we mainly apply these results to study Cartan-Hartogs domains and planar domains.
%Applications to other cases will appear in forthcoming works.

Theorem \ref{thm:g.s.c} can also be applied to investigate the geometry
of Cartan-Hartogs domains, which are certain Hartogs domains with
classical bounded symmetric domains as bases.
Those domains have been extensively studied in the past decade
(see e.g. \cite{Yin2000,Englis2000, Wang2004, Roos2005, Loi, Park2012, Yamamori2012}).
Motivated by the works in \cite{Liu},
Yin proposed a problem that whether all Cartan-Hartogs domains are homogenous regular (\cite{Yin2007}).
In this paper, we answer this question affirmatively.
Consequently, all Cartan-Hartogs domains are hyperconvex and have bounded geometry; various
classical intrinsic metrics, as well as all the measures considered in \S \ref{sec:comparison. intrinsic form}, on these domains are equivalent.
%We also give a boundary estimate of squeezing functions of Thullen domains defined
%as $\{(z_1,z_2)\in\mathbb{C}^2; |z_1|^{2k}+|z_2|^2<1\}$ for $k>0$, which are special Cartan-Hartogs domains.
%A detailed estimate of squeezing functions on general Cartan-Hartogs domains will appear in a separate work.
%We also consider a Reinhardt domain defined by
% $$\{(z_1,z_2)\in\mathbb{C}^2; \log^2|z_1|^2+\log^2|z_2|^2<1\}.$$
%It is a nonconvex strongly pseudoconvex domain with smooth boundary.
%We prove that this domain is homogenous regular.
%It is the first example of homogenous regular domain that
%is nonconvex and has smooth boundary.

Though squeezing functions are originally defined for bounded domains,
similar idea is possibly applied to algebraic geometry.
In fact, by Griffiths' results on uniformization  of algebraic varieties \cite{Griffiths71},
we can define squeezing functions on projective manifolds,
which will be studied more carefully elsewhere (see \S \ref{sec:further study}).

The rest of the paper is organized as follows.
In \S \ref{sec:squ. func. limit domain}, we study the stability of squeezing functions on a sequence of domains.
In \S \ref{sec:comparison. intrinsic form}, we give comparisons of intrinsic measures and metrics in terms of squeezing functions.
We prove Theorem \ref{thm:g.s.c} and Theorem \ref{thm:s.p.c} in \S \ref{sec:bound estimate}.
In \S \ref{sec:application}, we apply the results in previous sections to study geometry of Cartan-Hartogs domains and strongly pseudoconvex domains.
In the final \S \ref{sec:further study},
we propose some related problems that ask for further studies.

\vspace{.1in} {\em Acknowledgements}. The authors are grateful to J. E. Forn{\ae}ss
for showing them the preprint \cite{Fornaess-Wold}.
They like to thank B.Y. Chen,  K.-T. Kim,
K.F. Liu, P. Pflug, S.-K. Yeung, W.P. Yin, and X.Y. Zhou for helpful discussions.
The authors are partially supported by NSFC grants (10901152 and 11001148),
BNSF(No.1122010) and the President Fund of GUCAS.

\section{Stability of squeezing functions}\label{sec:squ. func. limit domain}

In this section, we consider the relation between the limit of squeezing functions of a sequence of
domains and the squeezing function of the limit domain.
For a sequence of increasing domains, we have the following

\begin{thm}\label{thm: increase limit domain}
Let $D\subset\mathbb{C}^n$ be a bounded domain and $D_k\subset D$ $(k\in\mathbb{N})$
be a sequence of domains such that $\cup_kD_k=D$ and $D_k\subset D_{k+1}$ for all $k$.
Then, for any $z\in D$,  $\lim_{k\rightarrow\infty}s_{D_k}(z)=s_D(z).$
\end{thm}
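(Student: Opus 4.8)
The plan is to establish the two inequalities $\liminf_{k\to\infty}s_{D_k}(z)\ge s_D(z)$ and $\limsup_{k\to\infty}s_{D_k}(z)\le s_D(z)$ separately. The first is soft: given $\varepsilon>0$, pick a holomorphic embedding $f:D\to B^n$ with $f(z)=0$ and $B^n(0,s_D(z)-\varepsilon)\subset f(D)$. Since $f$ is injective, the sets $f(D_k)$ form an increasing sequence of open sets with union $f(D)$, so the compact set $\overline{B^n(0,s_D(z)-2\varepsilon)}$ lies in $f(D_k)$ for all $k$ large. For such $k$, $f|_{D_k}:D_k\to B^n$ is an embedding with $f(z)=0$ and $B^n(0,s_D(z)-2\varepsilon)\subset f(D_k)$, hence $s_{D_k}(z)\ge s_D(z)-2\varepsilon$; letting $k\to\infty$ and then $\varepsilon\to0$ gives the claim.

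For the second inequality, set $r=\limsup_k s_{D_k}(z)$ (if $r=0$ there is nothing to prove, as $s_D(z)>0$). Choose a subsequence $k_j$ with $s_{D_{k_j}}(z)\to r$ and, for each $j$, an embedding $f_j:D_{k_j}\to B^n$ with $f_j(z)=0$ and $B^n(0,\rho_j)\subset f_j(D_{k_j})$, where $\rho_j:=s_{D_{k_j}}(z)-1/j\to r$. Each compact subset of $D$ is eventually contained in $D_{k_j}$, so by Montel's theorem and diagonal arguments we may pass to a further subsequence along which, simultaneously, $f_j\to f$ locally uniformly on $D$ and the inverse maps $g_j:=f_j^{-1}:B^n(0,\rho_j)\to D_{k_j}\subset D$ converge locally uniformly on $B^n(0,r)$ to a map $g$. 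Then $f:D\to\overline{B^n}$ and $g:B^n(0,r)\to\overline{D}$ are holomorphic with $f(z)=0$, $g(0)=z$. Because $D_{k_j}$ contains a fixed ball $B^n(z,\varepsilon_0)$ for $j$ large, the Schwarz lemma gives $\|Df_j(z)\|\le C$ for some constant $C$; since $Df_j(z)=Dg_j(0)^{-1}$, every singular value of $Dg_j(0)$ is at least $1/C$, and passing to the limit yields $|\det Dg(0)|\ge C^{-n}>0$. By the Hurwitz-type theorem for holomorphic maps between domains in $\mathbb{C}^n$ (a locally uniform limit of injective holomorphic maps is either injective or has identically vanishing Jacobian), $g$ is injective with nowhere-vanishing Jacobian, hence a biholomorphism onto the open set $g(B^n(0,r))$.

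The crucial step is to upgrade the inclusion $g(B^n(0,r))\subset\overline{D}$ to $g(B^n(0,r))\subset D$. Fix $w^{*}\in B^n(0,r)$. Since $g$ is a local biholomorphism at $w^{*}$, there are $\delta>0$ with $\overline{B^n(w^{*},\delta)}\subset B^n(0,r)$ and $\eta>0$ with $\min_{w\in\partial B^n(w^{*},\delta)}|g(w)-g(w^{*})|\ge2\eta$, so that $B^n(g(w^{*}),2\eta)\subset g(B^n(w^{*},\delta))$. For $j$ large, $g_j$ is defined on $\overline{B^n(w^{*},\delta)}$ with $\|g_j-g\|<\eta$ there, so a Rouch\'{e}/degree argument for holomorphic maps $\mathbb{C}^n\to\mathbb{C}^n$ shows that $g_j$ attains every value of $B^n(g(w^{*}),\eta)$ inside $B^n(w^{*},\delta)$. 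Hence $B^n(g(w^{*}),\eta)\subset g_j(B^n(0,\rho_j))\subset D_{k_j}\subset D$, which forces $g(w^{*})\in D$ (a boundary point of $D$ has no neighborhood contained in $D$). Thus $g$ maps $B^n(0,r)$ into $D$, and passing to the limit in $f_j\circ g_j=\mathrm{id}$ (legitimate now that the limiting values lie in $D$) gives $f\circ g=\mathrm{id}$ on $B^n(0,r)$. In particular $\det Df\not\equiv0$, so $f$ is injective (Hurwitz-type again), hence a holomorphic embedding of $D$ into the open ball $B^n$ (the image lies in the open ball by the maximum principle, since $f(z)=0$), and $B^n(0,r)=f(g(B^n(0,r)))\subset f(D)$. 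Therefore $s_D(z)\ge s_D(z,f)\ge r=\limsup_k s_{D_k}(z)$, and combined with the first inequality this gives $\lim_{k\to\infty}s_{D_k}(z)=s_D(z)$.

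The only genuinely delicate point is the passage to $g(B^n(0,r))\subset D$. The naive worry is that, although every interior point $w^{*}$ of $B^n(0,r)$ has a preimage $g_j(w^{*})\in D_{k_j}$ under $f_j$ for all large $j$, these preimages might drift to $\partial D$ as $j\to\infty$, so that the limit $g$ touches $\partial D$ and the whole argument collapses. The Rouch\'{e}/degree argument rules this out by exploiting that $g$ is an open map while $g_j(B^n(0,\rho_j))\subset D_{k_j}\subset D$. Everything else --- Montel's theorem, the diagonal extractions, the Schwarz-lemma derivative estimates, and the Hurwitz-type dichotomy --- is routine.
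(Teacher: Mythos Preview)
Your proof is correct and follows the same two-inequality strategy as the paper, using Montel's theorem to extract limits of (near-)extremal embeddings and their inverses, together with a Hurwitz-type dichotomy to secure injectivity of the limit maps. The only notable differences are organizational: you prove the inverse limit $g$ is injective first (via a Schwarz-lemma derivative bound) and then deduce injectivity of $f$ from $f\circ g=\mathrm{id}$, whereas the paper argues in the reverse order; and your explicit Rouch\'e/degree argument showing $g(B^n(0,r))\subset D$ (rather than merely $\subset\overline{D}$) is in fact slightly more careful than the paper's one-line deduction of the analogous inclusion.
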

\begin{proof}
By the existence of extremal maps w.r.t squeezing functions (see Theorem 2.1 in \cite{DGZ}), for each $k$, there is an injective holomorphic map $f_k: D_k\rightarrow B^n$ such that $f_k(z)=0$ and $B^n(0,s_{D_k}(z))\subset f_k(D_k)$. By Montel's theorem, we may assume the sequence $f_k$ converges uniformly on compact subsets of $D$ to a holomorphic map $f: D\rightarrow \mathbb{C}^n$.

We first prove that $f$ is injective. Assume $z\in D_{k_0}$ for some $k_0>0$, then it is clear that $$s_{D_k}(z)\geq \frac{d(z,\partial D_k)}{diam(D_k)}\geq \frac{d(z,\partial D_{k_0})}{diam(D)}$$
 for $k>k_0$. So there is a $\delta>0$ such that $B^n(0,\delta)\subset f_k(D_k)$ for all $k>k_0$. Set $g_k = f^{-1}_k|_{B^n(0,\delta)}:B^n(0,\delta)\rightarrow D$. By Cauchy's inequality, $|\det(dg_k(0))|$ is bounded above  uniformly  for all $k>k_0$ by a positive constant. Hence there exits a constant $c>0$, such that $|\det(df_k(z))|>c$ for all $k>k_0$. This implies $\det(df(z))\neq 0$. So the injectivity of $f$ follows from Lemma 2.3 in \cite{DGZ} and the generalized Rouch\'e's theorem (Theorem 3 in \cite{Lloyd}).

 Since $f$ is injective, it is an open map (see e.g. Theorem 8.5 in \cite{Fritzsche}). On the other hand, it is clear that $f(D)\subset \overline{B^n}$. So we have $f(D)\subset B^n$.

We now prove that $s_D(z)\geq\limsup_k s_{D_k}(z)$. Let $s_{D_{k_i}}$ be a subsequence such that
 $\lim_{k_i\rightarrow \infty}s_{D_{k_i}}(z)=\limsup_ks_{D_k}(z)=r$, then, as explained above, we have $r>0$. Let $\epsilon>0$ be an arbitrary positive number less than $r$, then $B^n(0,r-\epsilon)\subset f_k(D_{k_i})$ for $k_i$ large enough. Set $h_{k_i}=f^{-1}_{k_i}|_{B^n(0,r-\epsilon)}$, then $\lim_{k_i\rightarrow\infty}|\det(dh_{k_i}(z))|=|\det(df^{-1}(0))|\neq 0$. Bcy the argument mentioned above, $h:=\lim_{k_i}h_{k_i}$ is injective and hence $h(B^n(0,r-\epsilon))\subset D$. This implies $f(h(w))$ make sense for all $w\in B^n(0,r-\epsilon)$. It is clear that $f(h(w))=w$ for all $w\in B^n(0,r-\epsilon)$. So $B^n(0,r-\epsilon)\subset f(D)$ and $s_D(z)\geq r-\epsilon$. Since $\epsilon$ is arbitrary, we get $s_D(z)\geq\limsup_k s_{D_k}(z)$.

 Finally, we prove that $s_D(z)\leq \liminf_ks_{D_k}(z)$. Let $s_{D_{k'_i}}$ be a subsequence such that
 $\lim_{k'_i\rightarrow \infty}s_{D_{k'_i}}(z)=\liminf_ks_{D_k}(z)$. By the existence of extremal map, there exists an injective holomorphic map
 $\varphi:D\rightarrow B^n$ such that $\varphi(z)=0$ and $B^n(0,s_D(z))\subset \varphi(D)$. For arbitrary $0<\epsilon<s_D(z)$, by assumption, $\varphi^{-1}(B^n(0,s_D(z)-\epsilon))\subset D_{k'_i}$ for $k'_i$ large enough. So, for $k'_i$ large enough, we have $s_{D_{k'_i}}(z)\geq s_D(z)-\epsilon$. This implies $s_D(z)-\epsilon\leq \lim_{k'_i\rightarrow \infty}s_{D_{k'_i}}(z)$. Since $\epsilon$ is arbitrary, we get
 $s_D(z)\leq \lim_{k'_i\rightarrow \infty}s_{D_{k'_i}}(z)=\liminf_ks_{D_k}(z)$.
\end{proof}

%[\emph{Can the convergence be uniform on compact subsets of $D$} ?]

For a sequence of decreasing domains, we have
\begin{thm}\label{thm: decrease limit domain}
Let  $D\subset\mathbb{C}^n$ be a bounded domain and $D_k\supset D$ $(k\in\mathbb{N})$ be a sequence of domains such that $\cap_kD_k=D$ and $D_{k+1}\subset D_{k}$ for all $k$. Then, for any $z\in D$,  $s_D(z)\geq \limsup_ks_{D_k}(z)$.
\end{thm}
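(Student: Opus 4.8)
The plan is to adapt the ``easy half'' of the proof of Theorem \ref{thm: increase limit domain}, namely the argument showing $s_D(z)\le\liminf_k s_{D_k}(z)$ there — but now with the roles reversed. The key observation is that, since $D\subset D_k$ for every $k$, the \emph{inclusion} goes the favorable way for pushing an extremal map of $D_k$ \emph{down} to $D$. Concretely, for each $k$ let $f_k:D_k\to B^n$ be an extremal holomorphic embedding for $s_{D_k}(z)$, i.e. $f_k(z)=0$ and $B^n(0,s_{D_k}(z))\subset f_k(D_k)$, which exists by Theorem 2.1 of \cite{DGZ}. Passing to a subsequence $k_i$ realizing $r:=\limsup_k s_{D_k}(z)$, I first want to extract a convergent subsequence of $f_{k_i}|_D$ and identify the limit as an embedding of $D$ into $B^n$; then I want to show the limit map still ``sees'' a ball of radius $r$, which gives $s_D(z)\ge r$.

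The steps, in order: First, restrict each $f_{k_i}$ to the fixed domain $D$ (legitimate since $D\subset D_{k_i}$); these are uniformly bounded, so by Montel's theorem, after passing to a further subsequence, $f_{k_i}|_D$ converges locally uniformly to a holomorphic $f:D\to\overline{B^n}$. Second, show $f$ is injective: here I would reuse exactly the device from the proof of Theorem \ref{thm: increase limit domain} — a uniform lower bound $s_{D_k}(z)\ge d(z,\partial D)/\mathrm{diam}(D_1)$ (valid because $D\subset D_k\subset D_1$) produces a fixed $\delta>0$ with $B^n(0,\delta)\subset f_{k_i}(D_{k_i})$, and hence $f_{k_i}^{-1}$ restricted to $B^n(0,\delta)$ maps into $D_1$; Cauchy estimates bound $|\det d f_{k_i}^{-1}(0)|$ from above, so $|\det d f_{k_i}(z)|$ is bounded below by a positive constant, forcing $\det df(z)\ne 0$, and then injectivity of $f$ follows from Lemma 2.3 of \cite{DGZ} together with the generalized Rouch\'e theorem (Theorem 3 of \cite{Lloyd}), just as before. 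Since an injective holomorphic map is open and $f(D)\subset\overline{B^n}$, in fact $f(D)\subset B^n$, so $f$ is an admissible competitor in the definition of $s_D(z)$. Third — and this is the heart of the matter — I claim $B^n(0,r)\subset f(D)$, so that $s_D(z)\ge r$. Fix $\epsilon\in(0,r)$; for $k_i$ large, $s_{D_{k_i}}(z)>r-\epsilon$, so $g_{k_i}:=f_{k_i}^{-1}|_{B^n(0,r-\epsilon)}$ is defined and maps into $D_{k_i}$. Unfortunately $g_{k_i}$ maps into $D_{k_i}$, \emph{not} into the smaller $D$, so one cannot immediately say the limit $g$ lands in $D$. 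To handle this, note $g_{k_i}(B^n(0,r-\epsilon))\subset D_{k_i}\subset D_1$, so by Montel (after a further subsequence) $g_{k_i}\to g$ locally uniformly, $g$ injective by the same determinant argument, with $f\circ g=\mathrm{id}$ on $B^n(0,r-\epsilon)$ where defined; thus $g$ is a holomorphic right-inverse of $f$, forcing $g(B^n(0,r-\epsilon))\subset g(\overline{B^n(0,r-\epsilon)})\subset f^{-1}(\,\cdot\,)$. The issue is pinning down that $g(w)\in D$, not just in some $D_{k_i}$: since the $D_{k_i}$ decrease to $D$, for each \emph{fixed} $w$ and each fixed index $m$ one has $g_{k_i}(w)\in D_{k_i}\subset D_m$ once $k_i\ge m$, and $D_m$ is open and contains all but finitely many $g_{k_i}(w)$, hence contains the limit $g(w)$; letting $m\to\infty$ gives $g(w)\in\bigcap_m D_m=D$. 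Therefore $g(B^n(0,r-\epsilon))\subset D$, and since $f\circ g=\mathrm{id}$ there, $B^n(0,r-\epsilon)\subset f(D)$, so $s_D(z)\ge r-\epsilon$. Letting $\epsilon\to0$ gives $s_D(z)\ge r=\limsup_k s_{D_k}(z)$.

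The main obstacle I anticipate is precisely the last point: unlike the increasing case, a ball inside $f_{k_i}(D_{k_i})$ only guarantees that the inverse lands in the \emph{large} domain $D_{k_i}$, so one must genuinely use the nestedness $\bigcap_k D_k=D$ to trap the limit point $g(w)$ inside $D$ — and one must be careful that ``$g(w)\in D_m$ for all $m$'' uses openness of each $D_m$ (a limit of points in a closed set stays in it; here $D_m$ is open, but $g(w)$ is a limit of points in $D_m$, which certainly lies in $\overline{D_m}$, and to get the \emph{open} domain one instead observes that eventually $g_{k_i}(w)$ lies in the slightly larger $D_{m-1}$, whose \emph{closure} is contained in... ) — so the cleanest route is: $g(w)\in\overline{D_m}$ for every $m$, hence $g(w)\in\bigcap_m\overline{D_m}$; combined with $g(w)$ being in the open set $f^{-1}$-image and $f$ open, one upgrades to $g(w)\in D$. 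This is why the theorem only asserts the one-sided inequality $s_D(z)\ge\limsup_k s_{D_k}(z)$ and not equality: in the decreasing case there is no mechanism forcing an extremal map of $D$ to remain admissible for the larger $D_k$, so the reverse inequality genuinely can fail, and I would not attempt to prove it.
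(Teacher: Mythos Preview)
Your proposal is correct and matches the paper's proof essentially step for step: restrict the extremal maps $f_k$ to $D$, extract a limit $f$ via Montel, prove injectivity by the Jacobian--Rouch\'e argument (so $f(D)\subset B^n$), then pass to a limit $g$ of the inverses $g_{k_i}=f_{k_i}^{-1}|_{B^n(0,r-\epsilon)}$, use that $g$ is injective hence open to conclude $g(B^n(0,r-\epsilon))\subset\bigcap_k\overline{D_k}$ and therefore $\subset\bigcap_k D_k=D$, whence $f\circ g=\mathrm{id}$ yields $B^n(0,r-\epsilon)\subset f(D)$. Your self-correction on the ``why does $g(w)$ land in $D$ and not merely in $\overline{D_m}$'' point is exactly the paper's move---it too invokes openness of $g$ to pass from $\bigcap_k\overline{D_k}$ to $D$; your first attempt (``$D_m$ open and contains all but finitely many $g_{k_i}(w)$, hence the limit'') is indeed wrong, as you noticed, but the corrected version you settle on agrees with the paper.
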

\begin{proof}
For each $k$, let $f_k: D_k\rightarrow B^n$ an injective holomorphic map such that $f_k(z)=0$ and $B^n(0,s_{D_k}(z))\subset f_k(D_k)$. By Montel's theorem, we may assume $\lim_k f_k=f$ exists and give a holomorphic map from $D$ to $\mathbb{C}^n$. By the same argument as in proof of Theorem \ref{thm: increase limit domain}, we see that
$f$ is injective and $f(D)\subset B^n$.

Without loss of generality, we assume $\lim_ks_{D_k}(z)=r$. Then, for any $\epsilon>0$, $B^n(0,r-\epsilon)\subset f_k(D_k)$ for $k$ large enough. Set $g_k=f_k^{-1}|_{B^n(0,r-\epsilon)}:B^n(0,r-\epsilon)\rightarrow D_k$. We can assume $g_k$ converges uniformly on compact subsets of $B^n(0,r-\epsilon)$ to a holomorphic map $g:B^n(0,r-\epsilon)\rightarrow \mathbb{C}^n$. Similarly, one can show that $g$ is injective and hence open. On the other hand, by assumption, it is clear that $g(B^n(0,r-\epsilon))\subset \cap_{k\geq 1}\overline{D_k}$. Hence $g(B^n(0,r-\epsilon))\subset \cap_{k\geq 1}D_k=D$. This implies that $f(g(w))$ makes sense for all $w\in B^n(0,r-\epsilon)$. It is clear that $f(g(w))=w$ for all $w\in B^n(0,r-\epsilon)$. So $B^n(0,r-\epsilon)\subset f(D)$ and $s_D(z)\geq r-\epsilon$. Since $\epsilon$ is arbitrary, we get $s_D(z)\geq r=\lim_k s_{D_k}(z)$.

%If the limit $\lim_ks_{D_k}(z)$ does not exist, we can choose a subsequence $D_{k_i}$ such that $\lim_{k_i\rightarrow\infty } %s_{D_{k_i}}(z)=\limsup_{k\rightarrow\infty}s_{D_k}(z)$. Repeat the above process for the sequence $\{D_{k_i}\}$ of domains, we get
%$s_D(z)\geq \limsup_{k\rightarrow\infty}s_{D_k}(z)$.
\end{proof}

The following example shows that the strict inequality in Theorem \ref{thm: decrease limit domain} is possible:
\begin{exm}
Let $D=\{(z_1,z_2)| 0<|z_2|<|z_1|<1\}$ be the Hartogs triangle in $\mathbb{C}^2$. For a positive number $\epsilon$ (small enough), we define a domain $V_\epsilon$ in $\mathbb{C}^2$ as
$$V_\epsilon = \{(z_1,z_2)|0<|z_1|<1, 0<|z_2|<\epsilon\}.$$
Set $D_\epsilon = D\cup V_\epsilon$. Let $z^j=(z^j_1, z^j_2)$ be a sequence of points in $D$ satisfying the conditions $|z_1^j|\leq (1+\frac{1}{j})|z^j_2|$ and $|z_2^j|>a$ for all $j$, where $a>0$ is a fixed constant. Then we have\\
1). $\lim_{j\rightarrow\infty}s_{D_\epsilon}(z^j)=0$ uniformly with respect to $\epsilon$, and\\
2). there exists a positive constant $c$, such that $s_D(z^j)\geq c$ for all $j$.
\end{exm}
%Remark: since $D_\epsilon$ does not contain the origin, the envelope of holomorphy may not be $\delta^*\times \Delta^*$, but it is easy to show the the envelope of $H^\infty$ of holomorphy of it is $\Delta\times\Delta$.
\begin{proof}
1) By the Riemann's removable singularity theorem and Hartogs's extension theorem, the Carath\'eodory metric $C_{D_\epsilon}$ on $D_\epsilon$ is given by the restriction on $D_\epsilon$ of the Carath\'eodory metric on $\Delta\times\Delta$. Note that the Carath\'eodory metric on $\Delta\times\Delta$ is continuous, it is clear that there exists a sequence of positive numbers $r^j$ such that $\lim_{j}r_j=0$ and the balls, denoted by $B_\epsilon(z^j, r^j)$, in $D_\epsilon$ centered at $z^j$ with radius $r^j$ with respect to $C_{D_\epsilon}$,  are not relatively compact in $D_\epsilon$ for all $j$ and all $\epsilon$ small enough. Assume $f:D_\epsilon\rightarrow B^2$ is an injective holomorphic map such that $f(z^j)=0$ and $B^2(0,s_{D_\epsilon}(z^j))\subset f(D_\epsilon)$. By the decreasing property of Carath\'eodory metric, we see that $f(B_\epsilon(z^j, \sigma(\frac{s_{D_\epsilon}(z^j)}{2})))$ is relatively compact in $f(D_\epsilon)$, where $\sigma:[0,1)\rightarrow \mathbb{R}$ is the function defined as $\sigma(x)=\ln\frac{1+x}{1-x}$. Since $f$ is injective, this implies $s_{D_\epsilon}(z^j)\leq 2\sigma^{-1}(r^j)$ for all $j$. Hence $\lim_{j\rightarrow\infty}s_{D_\epsilon}(z^j)$ tends to 0 uniformly w.r.t $\epsilon$.\\
2)The map $\varphi(z_1,z_2)=(z_1, \frac{z_2}{z_1})$ gives a holomorphic isomorphism from $D$ to $\Delta^*\times\Delta^*$. Denote $\varphi(z^j)$ by $(w^j_1,w^j_2)$, then $|w^j_1|, |w^j_2|>a$. Note that the squeezing function on $\Delta^*$ is given by $s_{\Delta^*}(z)=|z|$ (see Corollary 7.2 in \cite{DGZ}), so we have $s_{\Delta^*\times\Delta^*}(w^j_1,w^j_2)\geq \frac{\sqrt{2}}{2}a$ for all $j$. By the holomorphic invariance of squeezing functions, we get $s_D(z^j)\geq \frac{\sqrt{2}}{2}a$ for all $j$.
\end{proof}

\section{Comparisons of intrinsic measures and metrics}\label{sec:comparison. intrinsic form}
In this section,
we give comparisons of intrinsic measures
and metrics on bounded domains
in terms of squeezing functions.

\subsection{Comparisons of intrinsic measures}

Let $D$ be a domain in $\mathbb{C}^n$.
The \emph{Carath\'{e}odory measure} on $D$ is defined to be the $(n,n)$-from
$$\mathcal{M}^C_D(z)= M^C_D(z)\frac{i}{2}dz_1\wedge d\bar{z}_1\wedge\cdots\wedge\frac{i}{2}dz_n\wedge d\bar{z}_n,$$
where
$$M^C_D(z)=\sup\{|\det f'(z)|^2; f:D\rightarrow B^n\ \text{\ holomorphic\ with}\ f(z)=0\};$$
and the \emph{Eisenman-Kobayashi measure } on $D$ is defined to be the $(n,n)$-from
$${\mathcal{M}}^K_D(z)= M^K_D(z)\frac{i}{2}dz_1\wedge d\bar{z}_1\wedge\cdots\wedge\frac{i}{2}dz_n\wedge d\bar{z}_n,$$
where
$$M^K_D(z)=\inf\{1/|\det f'(0)|^2; f:B^n\rightarrow D\ \text{\ holomorphic\ with}\ f(0)=z\}.$$

The Carath\'eodory measure and the
Eisenman-Kobayashi measure satisfy
the decreasing property.
Namely, if $f:D_1\rightarrow D_2$ be a holomorphic map
between two domains in $\mathbb{C}^n$,
then $f^{*}\mathcal M^C_{D_2}\leq \mathcal M^C_{D_1}$
and $f^{*}\mathcal M^K_{D_2}\leq \mathcal M^K_{D_1}$.

Let $h$ be a norm on $\mathbb{C}^n$,
and let $B^n(h):=\{v\in\mathbb{C}^n| h(v)<1 \}$
be the unit ball with respect to $h$.
Then the measure of $h$ is defined as
$$\frac{vol(B^n)}{vol(B^n(h))}\frac{i}{2}dz_1\wedge d\bar{z}_1\wedge\cdots\wedge\frac{i}{2}dz_n\wedge d\bar{z}_n,$$
where $vol(B^n)$ and $vol(B^n(h))$ denote
the Euclidean volumes of $B^n$ and $B^n(h)$ respectively.
Note that the measure of $h$ is completely determined by $h$,
and independent of the choice the original
inner product on $\mathbb{C}^n$.

On a bounded domain $D$,
the Kobayashi metric and the Carath\'eodory metric (see e.g. \cite{Pflug} for an introduction) are nondegenerate,
namely, they give norms on tangent spaces at all points of $D$.
So we can define the measures of the Kobayashi metric
and the Carath\'eodory metric on $D$ and denote them
by $\tilde{\mathcal{M}}^K_D$ and $\tilde{\mathcal{M}}^C_D$ respectively.
Since  the Kobayashi metric and the Carath\'edory metric
satisfy the decreasing property (see e.g. \cite{Pflug}),
so do their measures.
Here one should note that,
in general,
the Carath\'eodory (resp. Eisenman-Kobayashi) measure and  the measure
of the Carath\'eodory (resp. Kobayashi) metric are different.

On the unit ball $B^n$, all the four measures defined above coincide.
Let $\mathcal{M}$ and $\mathcal{M}'$ be any two of the four measures, i.e.,
the Carath\'eodory measure, the Eisenman-Kobayashi measure,
the measure of the Carath\'eodory metric,
and the measure of the Kobayashi metric.
%then it is  easy to see that
%$$P_{\mathcal{M}\mathcal{M}'}(r) = P_{\mathcal{M}'\mathcal{M}}(r)= \frac{1}{r^{2n}}.$$
%By Theorem \ref{thm:comp. of. intrinsic mea. gen}, we have
Then we have the following:

\begin{thm}\label{thm:Comp. of Car. Kob. Vol.}
Let $D$ be a bounded domain in $\mathbb{C}^n$,  $\mathcal{M}$ and $\mathcal{M}'$ as above.
Then we have
$$s^{2n}_D(z)\mathcal{M}'_D(z)\leq\mathcal{M}_D(z)\leq \frac{1}{s_D^{2n}(z)}\mathcal{M}'_D(z), z\in D.$$
In particular, if $D$ is  homogenous regular and $s_D(z)\geq c>0$, then
$$c^{2n}\mathcal{M}'_D(z)\leq\mathcal{M}_D(z)\leq \frac{1}{c^{2n}}\mathcal{M}'_D(z), z\in D,$$
and hence $\mathcal{M}_D$ and $\mathcal{M'}_D$ are equivalent.
\end{thm}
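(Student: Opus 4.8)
The plan is to estimate each of the four measures against the squeezing function by exploiting the key fact that on the unit ball $B^n$ all four measures coincide (and equal the standard Bergman-type volume form), together with the defining "sandwiching" property of $s_D(z)$: there is a holomorphic embedding $f:D\to B^n$ with $f(z)=0$ and $B^n(0,r)\subset f(D)\subset B^n$ for any $r<s_D(z)$. Since each of the four measures is biholomorphically invariant in the appropriate sense (or, more precisely, transforms by the Jacobian factor under biholomorphisms) and satisfies the decreasing property under holomorphic maps, it suffices to prove the two inequalities when $\mathcal M$ is one fixed "reference" measure and $\mathcal M'$ ranges over the others; actually the cleanest route is to show that every one of the four measures $\mathcal M_D$ satisfies, at the point $z$,
$$
s_D(z)^{2n}\,\mathcal M^{B^n}(0)\ \le\ (f^{-1})^*\mathcal M_D(0)\ \le\ \frac{1}{s_D(z)^{2n}}\,\mathcal M^{B^n}(0)
$$
under the extremal (or near-extremal) embedding $f$, where $\mathcal M^{B^n}$ denotes the common value of all four measures on $B^n$. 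Pulling this back by $f$ and comparing two such statements for $\mathcal M$ and $\mathcal M'$ then yields the theorem, since the factor $\mathcal M^{B^n}(0)$ and the Jacobian $|\det f'(z)|$ cancel.

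The main step is therefore the two-sided bound displayed above. For the \emph{lower} bound $s_D(z)^{2n}\mathcal M^{B^n}(0)\le (f^{-1})^*\mathcal M_D(0)$: write $D\hookrightarrow B^n$ via $f$, so $\mathcal M_D$ pushes forward to a measure on $f(D)$, and use the decreasing property for the inclusion $B^n(0,r)\hookrightarrow f(D)$ — a holomorphic map — which gives (at $0$) that the measure of $f(D)$ is at least the measure of $B^n(0,r)$. The latter, by the scaling behaviour of the four measures under the dilation $w\mapsto rw$ of $B^n$ onto $B^n(0,r)$, equals $r^{-2n}$ times (for Carathéodory-type, which increase under restriction) or $r^{2n}$ times (for Kobayashi-type, which... ) — one has to be careful here about which measures increase and which decrease under passing to a subdomain, and about the direction of the scaling factor $r^{2n}$ versus $r^{-2n}$; this bookkeeping is the most error-prone part. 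The \emph{upper} bound uses the other inclusion $f(D)\hookrightarrow B^n$ the same way. Letting $r\uparrow s_D(z)$ finishes it.

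The hard part, as just indicated, will be \textbf{not} any deep analysis but the consistent tracking of scaling exponents and inequality directions across the two "Carathéodory-flavoured" measures ($\mathcal M^C_D$ and $\tilde{\mathcal M}^C_D$), which are built from maps $D\to B^n$ and hence get \emph{larger} on smaller domains, versus the two "Kobayashi-flavoured" measures ($\mathcal M^K_D$ and $\tilde{\mathcal M}^K_D$), built from maps $B^n\to D$ and hence get \emph{smaller} on smaller domains. Since the asserted inequality is symmetric in $\mathcal M$ and $\mathcal M'$ and has the reciprocal factors $s_D^{2n}$ and $s_D^{-2n}$ on the two sides, every case must reduce to the same two estimates at $0$ for $f(D)$ squeezed between $B^n(0,r)$ and $B^n$. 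I would organize the write-up as a single lemma computing, for a domain $\Omega$ with $B^n(0,r)\subset\Omega\subset B^n$ and $0\in\Omega$, that $r^{2n}\le M(0)/M^{B^n}(0)\le r^{-2n}$ for each of the four densities $M$, proved once using monotonicity plus the dilation $w\mapsto rw$; then the theorem is immediate by invariance and by applying the lemma to $\Omega=f(D)$ for a near-extremal $f$ and to both $\mathcal M$ and $\mathcal M'$.
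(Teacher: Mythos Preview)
Your approach is essentially identical to the paper's: sandwich $f(D)$ between $B^n(0,r)$ and $B^n$, use monotonicity and the scaling relation $\mathcal M_{B^n(0,r)}(0)=r^{-2n}\mathcal M_{B^n}(0)$ to bound each measure at $0$, then take the ratio (which is biholomorphically invariant since both $\mathcal M$ and $\mathcal M'$ transform by the same Jacobian factor).

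One correction that will make the write-up much shorter than you fear: your claim that the ``Kobayashi-flavoured'' measures get \emph{smaller} on smaller domains is backwards. If $D_1\subset D_2$ then every holomorphic $f:B^n\to D_1$ is also a map into $D_2$, so the infimum defining $M^K_{D_2}$ is taken over a larger set and hence $M^K_{D_2}(z)\le M^K_{D_1}(z)$; the same holds for $\tilde{\mathcal M}^K$ via the Kobayashi metric. Thus \emph{all four} measures satisfy the same decreasing property $f^*\mathcal M_{D_2}\le\mathcal M_{D_1}$ for holomorphic $f:D_1\to D_2$, and in particular all four get \emph{larger} on subdomains. The ``hard bookkeeping'' you flag therefore collapses: for any of the four one has uniformly
\[
\mathcal M_{B^n}(0)\ \le\ \mathcal M_{f(D)}(0)\ \le\ \mathcal M_{B^n(0,s_D(z))}(0)=s_D(z)^{-2n}\mathcal M_{B^n}(0),
\]
and dividing the corresponding inequalities for $\mathcal M$ and $\mathcal M'$ gives the theorem directly, with no case split.
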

\begin{proof}
Let $f:D\rightarrow B^n$ be a holomorphic injective map with $f(z)=0$
and $B^n(0, s_D(z))\subset f(D)$.
Note that $\mathcal M_{B^n}=\mathcal M'_{B^n}=r^{2n}\mathcal M_{B^n(0,r)}.$
By the decreasing property of $\mathcal M$ and $\mathcal M'$,
we have
$$s_D^{-2n}(z)\mathcal M_{B^n}(0)\geq\mathcal M_{f(D)}(0)\geq \mathcal M_{B^n}(0),$$
$$s_D^{-2n}(z)\mathcal M_{B^n}(0)\geq\mathcal M'_{f(D)}(0)\geq \mathcal M_{B^n}(0).$$
Note that $\frac{\mathcal M_{f(D)}(0)}{\mathcal M'_{f(D)}(0)}=\frac{\mathcal M_{D}(0)}{\mathcal M'_{D}(0)}$,
we get
$$s^{2n}_D(z)\mathcal{M}'_D(z)\leq\mathcal{M}_D(z)\leq \frac{1}{s_D^{2n}(z)}\mathcal{M}'_D(z), z\in D.$$
\end{proof}

\subsection{Comparisons of intrinsic  metrics}
It is known that the Kobayashi metric
and Carath\'eodory metric
on bounded domains are Finsler metrics
satisfying the decreasing property.
They are coincide on the unit ball.
It is also well known that the Carath\'eodory metric
on a bounded domain is dominated by its Kobayashi metric.
Let $D$ be a bounded domain, and denote by
$\mathcal H_D^K$ and $\mathcal{H}_D^C$
the Carath\'eodory metric and the Kobayashi metric on
$D$ respectively.
With the same argument as in the proof of Theorem \ref{thm:Comp. of Car. Kob. Vol.},
one can prove the following

\begin{thm}\label{thm:comp. Kob. and Car. metric}
Let $D$ be a bounded domain in $\mathbb{C}^n$.
Then
$$s_D(z)\mathcal{H}^K_D(z)\leq\mathcal{H}^C_D(z)\leq \mathcal{H}^K_D(z).$$
In particular, if $D$ is homogenous regular and $s_D(z)\geq c>0$,
then, for any $z\in D$,
$$c\mathcal{H}^K_D(z)\leq\mathcal{H}^C_D(z)\leq \mathcal{H}^C_K(z),$$
and hence $\mathcal{H}^C_D$ and $\mathcal{H}^K_D$ are equivalent.
\end{thm}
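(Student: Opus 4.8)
The plan is to imitate the proof of Theorem~\ref{thm:Comp. of Car. Kob. Vol.} almost verbatim, the only change being that we work with norms on tangent vectors instead of with $(n,n)$-forms. The right-hand inequality $\mathcal{H}^C_D(z)\le\mathcal{H}^K_D(z)$ is the classical fact that the Carath\'eodory metric of a bounded domain is dominated by its Kobayashi metric (immediate from the Schwarz--Pick lemma and the definitions), and needs no squeezing function. So the real content is the left-hand inequality $s_D(z)\,\mathcal{H}^K_D(z)\le\mathcal{H}^C_D(z)$.

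Fix $z\in D$, a tangent vector $v\in\mathbb{C}^n=T_zD$, and put $s=s_D(z)$. By the existence of extremal maps for the squeezing function (Theorem~2.1 of \cite{DGZ}) choose a holomorphic embedding $f:D\to B^n$ with $f(z)=0$ and $B^n(0,s)\subset f(D)\subset B^n$, and set $w=f'(z)v$. Biholomorphic invariance of both metrics gives $\mathcal{H}^C_D(z)(v)=\mathcal{H}^C_{f(D)}(0)(w)$ and $\mathcal{H}^K_D(z)(v)=\mathcal{H}^K_{f(D)}(0)(w)$. The decreasing property applied to the inclusions $B^n(0,s)\subset f(D)\subset B^n$, evaluated at $0$ on the vector $w$, gives $\mathcal{H}^C_{B^n}(0)(w)\le\mathcal{H}^C_{f(D)}(0)(w)$ and $\mathcal{H}^K_{f(D)}(0)(w)\le\mathcal{H}^K_{B^n(0,s)}(0)(w)$. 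On a ball the Kobayashi and Carath\'eodory metrics coincide, and the dilation $\zeta\mapsto\zeta/s$ is a biholomorphism of $B^n(0,s)$ onto $B^n$, so, since a Finsler metric is a norm (hence $1$-homogeneous) on each tangent space, $\mathcal{H}^K_{B^n(0,s)}(0)(w)=\frac{1}{s}\,\mathcal{H}^K_{B^n}(0)(w)=\frac{1}{s}\,\mathcal{H}^C_{B^n}(0)(w)$. Chaining these estimates,
\begin{align*}
\mathcal{H}^C_D(z)(v)&=\mathcal{H}^C_{f(D)}(0)(w)\ \geq\ \mathcal{H}^C_{B^n}(0)(w)\ =\ \mathcal{H}^K_{B^n}(0)(w)\\
&=\ s\,\mathcal{H}^K_{B^n(0,s)}(0)(w)\ \geq\ s\,\mathcal{H}^K_{f(D)}(0)(w)\ =\ s\,\mathcal{H}^K_D(z)(v),
\end{align*}
which is the required inequality. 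If one does not wish to assume that the supremum defining $s_D(z)$ is attained with $B^n(0,s)\subset f(D)$, one simply runs the same chain with $B^n(0,r)\subset f(D)$ for an arbitrary $r<s$ and lets $r\uparrow s$ at the end.

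I expect no genuine obstacle here: this is just the metric analogue of the volume computation already carried out, and the only steps needing a moment's attention are the dilation step (valid because the metrics are positively homogeneous of degree one in the tangent vector) and the bookkeeping of where the decreasing property is applied (at the single point $0$, for the single vector $w$). The ``in particular'' statement then follows at once: if $s_D(z)\ge c>0$ for every $z\in D$, then $c\,\mathcal{H}^K_D(z)\le s_D(z)\,\mathcal{H}^K_D(z)\le\mathcal{H}^C_D(z)\le\mathcal{H}^K_D(z)$, so $\mathcal{H}^C_D$ and $\mathcal{H}^K_D$ are equivalent.
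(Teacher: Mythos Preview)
Your proposal is correct and is exactly what the paper does: it states that Theorem~\ref{thm:comp. Kob. and Car. metric} follows ``with the same argument as in the proof of Theorem~\ref{thm:Comp. of Car. Kob. Vol.}'', i.e., by squeezing $f(D)$ between $B^n(0,s_D(z))$ and $B^n$, using the decreasing property and the scaling relation on balls, and then invoking biholomorphic invariance. Your write-up simply spells out these steps for the Finsler metrics rather than the measures.
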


For a bounded domain $D$,
we have got a comparison between its
Carath\'eodory metric $\cH_D^C$ and Kobayashi metric
$\cH_D^K$ in terms of its squeezing function
in Theorem \ref{thm:comp. Kob. and Car. metric}.
The Bergman metric $\cH^B_D$,
which does not satisfy the decreasing property,
is invariant under biholomorphic transformations.
When $D$ is pseudoconvex,
it is well known that there is a unique
complete K\"ahler-Einstein metric on $D$,
denoted by $\cH^{KE}_D$,
with Ricci curvature normalized by $-(n+1)$ \cite{Mok-Yau},
which is also invariant under biholomorphic transformations.
%In this section,
%we will give a comparison of the Kobayashi metric $\cH_D^K$
%with the Bergman metric $\cH_D^B$ and,
%if $D$ is pseudoconvex,
%the K\"ahler-Einstein metric $\cH_D^{KE}$ in terms of
%the squeezing function on $D$.
%The main result is
\begin{thm}\label{thm:comp. Beg. KE metrics}
Let $D$ be a bounded domain in $\mathbb{C}^n$ and $z\in D$,
and let $s_D$ be the squeezing function on $D$.
Then
\be\label{eqn:comp. Kob. and Berg.}
s_D(z)\cH_D^K(z)\leqslant \cH_D^B(z) \leqslant \frac{2^{n+2}\pi}{s^{n+1}_D(z)}\cH_D^K(z).
\ee
If in addition $D$ is pseudoconvex, then
\be\label{eqn:comp. Kob. and KE.}
\sqrt{\frac{1}{n}}s_D(z) \cH_D^K(z)\leqslant \cH_D^{KE}(z)\leqslant\left(\frac {n}{s^2_D(z)}\right)^{(n-1)/2}\cH_D^K(z).
\ee
\end{thm}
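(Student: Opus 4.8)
The plan is to exploit the holomorphic invariance of the Bergman and K\"ahler--Einstein metrics together with the ``squeezing sandwich'' $B^n(0,s_D(z)) \subset f(D) \subset B^n$ supplied by a near-extremal embedding $f:D\to B^n$ with $f(z)=0$, exactly in the spirit of the proofs of Theorems \ref{thm:Comp. of Car. Kob. Vol.} and \ref{thm:comp. Kob. and Car. metric}. The difference is that $\cH^B$ and $\cH^{KE}$ are not decreasing under holomorphic maps, so instead I would use the known two-sided comparisons between these metrics and the Kobayashi metric on the model domain $B^n$ (or on balls, by scaling), and then transport everything back to $D$ via $f$. First I would record the scaling behavior: for the ball $B^n(0,r)$ one has $\cH^B_{B^n(0,r)}(0) = \frac{1}{r}\cH^B_{B^n}(0)$ and similarly $\cH^K_{B^n(0,r)}(0)=\frac1r\cH^K_{B^n}(0)$, $\cH^{KE}_{B^n(0,r)}(0)=\frac1r\cH^{KE}_{B^n}(0)$; on $B^n$ all three coincide after the stated normalizations (up to the explicit universal constants that will produce the $2^{n+2}\pi$ and the $n$-dependent factors).

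For the lower bounds in \eqref{eqn:comp. Kob. and Berg.} and \eqref{eqn:comp. Kob. and KE.}: the Kobayashi metric is decreasing, so $\cH^K_D(z) = \cH^K_{f(D)}(0) \leqslant \cH^K_{B^n(0,s_D(z))}(0) = \frac{1}{s_D(z)}\cH^K_{B^n}(0)$, hence $s_D(z)\cH^K_D(z) \leqslant \cH^K_{B^n}(0) = \cH^B_{B^n}(0)$; and since the Bergman metric is \emph{monotone increasing} under inclusion of domains (a subdomain has larger Bergman metric at a common point), $\cH^B_{B^n}(0) \leqslant \cH^B_{f(D)}(0) = \cH^B_D(z)$. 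Chaining these gives $s_D(z)\cH^K_D(z)\leqslant \cH^B_D(z)$. The same template, using that the complete K\"ahler--Einstein metric with Ricci curvature $-(n+1)$ also enjoys a monotonicity/comparison with the Kobayashi metric of the form $\cH^{KE}_D \geqslant \sqrt{1/n}\,\cH^K_D$ on any bounded pseudoconvex domain (Mok--Yau / standard Schwarz-lemma estimates), yields the left inequality of \eqref{eqn:comp. Kob. and KE.}.

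For the upper bounds one runs the argument in the other direction: from $f(D)\subset B^n$ and monotonicity of $\cH^B$ we get $\cH^B_D(z) = \cH^B_{f(D)}(0)\geqslant \cH^B_{B^n}(0)$ is the wrong direction, so instead I would bound $\cH^B_{f(D)}(0)$ from above by $\cH^B_{B^n(0,s_D(z))}(0)=\frac{1}{s_D(z)}\cH^B_{B^n}(0)$ using monotonicity for the inclusion $B^n(0,s_D(z))\subset f(D)$, then convert $\cH^B$ on the small ball into $\cH^K$ on $D$ by combining the ball identities with a crude universal estimate $\cH^B_{B^n}(0)\leqslant 2^{n+2}\pi\, \cH^K_{B^n}(0)$ (this is where the explicit constant $2^{n+2}\pi$ enters — it comes from comparing the Bergman kernel's size on a ball with the Kobayashi ball volume, a Hörmander-type or elementary kernel estimate) together with $\cH^K_{B^n}(0) = \cH^K_{f(D)}(0)\leqslant \cH^K_{B^n(0,s_D(z))}(0)$ — no, rather $\cH^K_D(z)\geqslant s_D^n(z)\cdot(\text{stuff})$; more carefully, I would track that each passage from $B^n$ down to $B^n(0,s_D(z))$ costs one factor of $1/s_D(z)$ in the metric but the Bergman-kernel comparison is dimension-sensitive, accumulating to the stated power $s_D^{-(n+1)}$. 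For \eqref{eqn:comp. Kob. and KE.} the upper bound uses the analogous Mok--Yau/Cheng--Yau estimate bounding $\cH^{KE}$ above by a constant multiple of the Kobayashi (or Bergman) metric on a ball, with the curvature-pinching of the K\"ahler--Einstein metric on a ``nearly ball'' domain producing the factor $(n/s_D^2(z))^{(n-1)/2}$.

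The main obstacle I anticipate is \emph{not} the soft transport-via-$f$ step — that is routine once monotonicity and invariance are in hand — but pinning down the explicit universal constants: the sharp-enough comparison $\cH^B_{B^n}(0)\leqslant 2^{n+2}\pi\,\cH^K_{B^n}(0)$ and the curvature-based bound $\cH^{KE}\leqslant (n/s^2)^{(n-1)/2}\cH^K$ on a domain squeezed between two concentric balls. These require a careful estimate of the Bergman kernel on a domain sandwiched $B^n(0,s)\subset \Omega \subset B^n$ (from below by the ball-of-radius-one kernel and from above by the small-ball kernel, using the extremal characterization of the Bergman kernel), and, for the K\"ahler--Einstein case, invoking the Yau–Schwarz lemma to compare $\cH^{KE}_\Omega$ with the Bergman or Kobayashi metric while controlling the holomorphic sectional curvature; the dimension-dependent exponents are exactly the cost of these curvature comparisons. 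I would isolate these two kernel/curvature lemmas first and then the theorem follows by bookkeeping of the $s_D(z)$-powers.
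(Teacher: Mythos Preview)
The paper itself gives no proof of this theorem: it only remarks that the homogeneous-regular case (with $s_D(z)$ replaced by a uniform lower bound $c$) is due to Yeung, and that ``a slight modification of the method in \cite{Yeung}'' yields the pointwise version, omitting all details. Your overall plan---transport by a near-extremal $f:D\to B^n$ with $f(z)=0$ and exploit the sandwich $B^n(0,s_D(z))\subset f(D)\subset B^n$---is exactly Yeung's, so in outline you are on the intended track.

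There is, however, a real gap. You write that ``the Bergman metric is monotone increasing under inclusion of domains (a subdomain has larger Bergman metric at a common point)'' and rest both directions of \eqref{eqn:comp. Kob. and Berg.} on this. That assertion is not a valid tool: the Bergman \emph{kernel} $K_D(z,z)$ is monotone under inclusion, but the Bergman \emph{metric} $\partial\bar\partial\log K_D$ is not holomorphically contractible and hence not monotone in general. What \emph{is} monotone is each of the two extremal quantities whose ratio gives $\cH^B_D(z,v)^2$, namely
\[
b^{(0)}_D(z)=\sup\bigl\{|g(z)|^2:\ g\ \text{holomorphic on}\ D,\ \|g\|_{L^2(D)}\le 1\bigr\},
\]
\[
b^{(1)}_D(z,v)=\sup\bigl\{|dg_z(v)|^2:\ g\ \text{holomorphic on}\ D,\ g(z)=0,\ \|g\|_{L^2(D)}\le 1\bigr\}.
\]
Both decrease as the domain grows, and under dilation by $s$ they scale by $s^{-2n}$ and $s^{-2(n+1)}$ respectively. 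Applying the sandwich to $b^{(0)}$ and $b^{(1)}$ \emph{separately} and only then forming the ratio is what produces the exponents $1$ and $n+1$ on $s_D(z)$ in \eqref{eqn:comp. Kob. and Berg.}; that is Yeung's actual mechanism, and it is what you should substitute for the monotonicity claim. Your later aside about ``the extremal characterization of the Bergman kernel'' is pointing at exactly this, but as written the argument rests on the false step.

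A parallel issue affects the K\"ahler--Einstein lower bound. You assert a direct Schwarz-type inequality $\cH^{KE}_D\ge\sqrt{1/n}\,\cH^K_D$ on an arbitrary bounded pseudoconvex domain; if that held, the factor $s_D(z)$ on the left of \eqref{eqn:comp. Kob. and KE.} would be redundant. What the Yau--Schwarz lemma actually delivers (source $(D,\cH^{KE}_D)$ complete with Ricci $-(n+1)$, target the disc or ball with constant negative holomorphic sectional curvature) is a bound $\cH^{KE}_D\ge c_n\,\cH^{C}_D$ against the \emph{Carath\'eodory} metric; converting $\cH^C_D$ to $\cH^K_D$ is precisely where the factor $s_D(z)$ from Theorem~\ref{thm:comp. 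Kob. and Car. metric} enters. The upper bound likewise runs the Schwarz lemma in the other direction on the sandwich, comparing $\cH^{KE}_{f(D)}$ with the K\"ahler--Einstein metric of the inner ball $B^n(0,s_D(z))$, and the passage between the two concentric balls supplies the power of $s_D(z)$.
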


\begin{rem}
If $D$ is homogenous regular and $s_D(z)\geq c$
for some constant $c>0$,
the above comparison,
with $s_D(z)$ replaced by $c$,
was proved in \cite{Yeung}.
In particular,
the Bergman metric and the K\"ahler-Einstein metric
on $D$ are equivalent to the Kobayashi metric.
A slight modification of the method in \cite{Yeung}
can be used to give the proof of
Theorem \ref{thm:comp. Beg. KE metrics},
so we omit it here.
%Since the proof of Theorem \ref{thm:comp. Beg. KE metrics} is very similar to the argument in \cite{Yeung},  we just give a sketch of the main idea.
\end{rem}

For a metric $h$ on a bounded domain $D$, as explained in the above subsection,
we can define the measure $\mathcal{M}^h$ of $h$.
If there are two  metrics $h$ and $h'$ on $D$ satisfying the condition
$$a(z)h'(z)\leq h(z)\leq b(z)h'(z), z\in D,$$
where $a$ and $b$ are two continuous strictly positive functions on $D$,
then the measures $\mathcal{M}^h$ and $\mathcal{M}^{h'}$ satisfy the comparison
$$(a(z))^{2n}\mathcal{M}^{h'}(z)\leq \mathcal{M}^h(z)\leq (b(z))^{2n}\mathcal{M}^{h'}(z), z\in D.$$
In particular, if $h$ and $h'$ are equivalent,
then $\mathcal{M}^h$ and $\mathcal{M}^{h'}$ are also equivalent.

We have shown in Theorem \ref{thm:Comp. of Car. Kob. Vol.} that the measures of the Kobayashi metric
and the Carath\'eodory metric on a homogenous regular domain are equivalent,
and they are equivalent to the Carath\'eodory measure and the Kobayashi measure.
We also see that, on a homogenous regular domain, the Kobayashi metric
and the Carath\'eodory metric are equivalent.
By Theorem \ref{thm:comp. Beg. KE metrics},
they are equivalent to the Bergman metric
and the K\"ahler-Einstein metric.
As a consequence, we have
\begin{thm}
On a homogenous regular domain,
the measures of the Kobayashi metric,
the Carath\'eodory metric,
the Bergman metric,
and the K\"ahler-Einstein metric are equivalent,
and they are equivalent to the Carath\'eodory
and the Eisenman-Kobayashi measures.
\end{thm}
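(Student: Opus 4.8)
The plan is to obtain this statement as a corollary of the three comparison theorems already proved, by combining them via transitivity of the equivalence relation on metrics and on measures. First I would fix the standing data: since $D$ is homogenous regular there is a constant $c>0$ with $s_D(z)\geq c$ for all $z\in D$; and since homogenous regular domains are pseudoconvex (in fact hyperconvex, see \cite{Yeung}), the K\"ahler-Einstein metric $\cH_D^{KE}$ is defined, so all four metrics and all six measures in the statement make sense.

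Next I would show that the four Finsler metrics $\cH_D^K$, $\cH_D^C$, $\cH_D^B$, $\cH_D^{KE}$ are pairwise equivalent. Theorem \ref{thm:comp. Kob. and Car. metric} gives $c\,\cH_D^K(z)\leq\cH_D^C(z)\leq\cH_D^K(z)$, and Theorem \ref{thm:comp. Beg. KE metrics}, after substituting $s_D(z)\geq c$, yields constant two-sided bounds relating $\cH_D^B$ to $\cH_D^K$ and $\cH_D^{KE}$ to $\cH_D^K$. Thus each of $\cH_D^C$, $\cH_D^B$, $\cH_D^{KE}$ is equivalent to $\cH_D^K$, and hence the four metrics are mutually equivalent.

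Then I would pass to measures. The principle recorded just before the theorem says that a constant-bounded comparison of two metrics induces a comparison of their associated measures with bounds equal to the $2n$-th powers of the original constants; in particular equivalent metrics have equivalent measures. Applying this to the previous step shows that $\tilde{\mathcal{M}}^K_D$, $\tilde{\mathcal{M}}^C_D$, and the measures of the Bergman and K\"ahler-Einstein metrics are pairwise equivalent. Separately, Theorem \ref{thm:Comp. of Car. Kob. Vol.} with $s_D(z)\geq c$ already gives that the Carath\'eodory measure $\mathcal{M}^C_D$, the Eisenman-Kobayashi measure $\mathcal{M}^K_D$, $\tilde{\mathcal{M}}^C_D$, and $\tilde{\mathcal{M}}^K_D$ are pairwise equivalent (with constant $c^{2n}$). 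Since $\tilde{\mathcal{M}}^K_D$ (equivalently $\tilde{\mathcal{M}}^C_D$) appears in both lists, chaining the equivalences through it gives that all six objects are mutually equivalent, which is the assertion.

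I do not expect a genuine obstacle here: the mathematical content is entirely contained in Theorems \ref{thm:Comp. of Car. Kob. Vol.}, \ref{thm:comp. Kob. and Car. metric}, and \ref{thm:comp. Beg. KE metrics}, and this final statement merely organizes them. The only points needing care are bookkeeping ones: one must not conflate the Carath\'eodory (resp. Eisenman-Kobayashi) measure with the measure of the Carath\'eodory (resp. Kobayashi) metric, since a priori these are different and are linked only through Theorem \ref{thm:Comp. of Car. Kob. Vol.}; and one must note that the bounds in Theorem \ref{thm:comp. Beg. KE metrics}, expressed via $s_D(z)$, become genuine constants the moment $s_D(z)\geq c$ is inserted, so that the resulting metric comparisons are indeed equivalences. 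No new analysis is required.
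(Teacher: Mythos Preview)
Your proposal is correct and follows essentially the same route as the paper: the paper presents this theorem as an immediate consequence of Theorems \ref{thm:Comp. of Car. Kob. Vol.}, \ref{thm:comp. Kob. and Car. metric}, and \ref{thm:comp. Beg. KE metrics} together with the observation that equivalent metrics have equivalent measures, and your write-up organizes exactly these ingredients with the same chaining through $\tilde{\mathcal M}^K_D$. Your explicit remark that homogenous regular domains are pseudoconvex (so that $\cH_D^{KE}$ is defined) is a point the paper leaves implicit, but otherwise the arguments coincide.
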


The equivalence of some of the above measures
was established in \cite{Overholser} for
Teichm\"{u}ller spaces,
which is one kind of homogenous regular domains.

%Let $\mathcal{M}^K_D$ and $\mathcal{M}^K_B$ be the measures of the Kobayashi metric and the Bergman metric on $D$ respectively, and let $\mathcal{M}^{KE}_D$
%be the measure of the K\"ahler-Einstein metric on $D$ if $D$ is pseudoconvex. Then a direct corollary of Theorem \ref{thm:comp. Beg. KE metrics} is:
%
%\begin{cor}
%Let $D$ be a bounded domain in $\mathbb{C}^n$ and $z\in D$,  let $s_D$ be the squeezing function of $D$.  Then we have
%\be\label{eqn:comp. Kob. and Berg. Vol.}
%s^{2n}_D(z)\cM_D^K(z)\leqslant \cM_D^B(z) \leqslant \Large(\frac{2^{n+2}\pi}{s^{n+1}_D(z)}\Large)^{2n}\cM_D^K(z).
%\ee
%If in addition $D$ is pseudoconvex, then
%\be\label{eqn:comp. Kob. and KE. Vol.}
%(\frac{2}{n})^ns^{2n}_D(z) \cM_D^K(z)\leqslant \cM_D^{KE}(z)\leqslant\left(\frac {n}{2s^2_D(z)}\right)^{n(n-1)}\cM_D^K(z).
%\ee
%\end{cor}

\section{Boundary behavior of squeezing functions}\label{sec:bound estimate}
Let $D$ be a bounded domain and $p\in\partial D$.
Recall that $p$ is called a \emph{globally strongly convex} (g.s.c)
boundary point of $D$
if $\partial D$ is $C^2$-smooth
and strongly convex at $p$,
and $\bar D\cap T_p\partial D=\{p\}$,
where $T_p\partial D$ is the tangent hyperplane of
$\partial D$ at $p$.
%If $p$ is a g.s.c boundary point of $D$,
%it is shown in \cite{F-M} that $\limsup_{z\rightarrow p}s_D(p)=1$.
In this section we will prove the following

\begin{thm}\label{thm:g.s.c text}
Let $D\subset\mathbb{C}^n$ be a bounded domain.
If $p$ is a g.s.c boundary point of $D$,
then $\lim_{z\rightarrow p}s_D(z)=1.$
\end{thm}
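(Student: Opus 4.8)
The plan is to show that near a g.s.c.\ boundary point $p$, the domain $D$ can be squeezed into $B^n$ in a way that is "almost optimal", using the strong convexity of $\partial D$ at $p$ to get a good inner approximation by a ball and the condition $\bar D\cap T_p\partial D=\{p\}$ to get a good outer approximation by a half-space (hence, after the Cayley-type map, by a ball). Concretely, I would first normalize coordinates so that $p=0$, the inner normal to $\partial D$ at $p$ points in the $\mathrm{Re}\,z_n$ direction, and $T_p\partial D=\{\mathrm{Re}\,z_n=0\}$. Strong convexity at $p$ gives, for every $\varepsilon>0$, a ball $B_{\mathrm{in}}=B^n(c_\varepsilon,R_\varepsilon)$ with $p\in\partial B_{\mathrm{in}}$ and $B_{\mathrm{in}}\cap U\subset D$ on some neighborhood $U$ of $p$; shrinking $R_\varepsilon$ if necessary (and using boundedness of $D$ plus $\bar D\cap T_p\partial D=\{p\}$ to control $D$ away from $p$), one arranges $B_{\mathrm{in}}\subset D$ outright, with the tangency at $p$ being "order $1+O(\varepsilon)$" in a sense to be made precise.

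The second ingredient is the outer bound. The hypothesis $\bar D\cap T_p\partial D=\{p\}$ says $\bar D$ lies, except for the single point $p$, strictly on one side of the real hyperplane $T_p\partial D$; together with compactness of $\bar D$, for any neighborhood $V$ of $p$ there is $\delta>0$ with $\bar D\setminus V\subset\{\mathrm{Re}\,z_n>\delta\}$, and near $p$ strong convexity already forces $D$ into a paraboloid-like region tangent to $\{\mathrm{Re}\,z_n=0\}$. So $D$ is contained in a half-space $H=\{\mathrm{Re}\,z_n>0\}$ (after translating, a half-space whose boundary is the tangent plane), and in fact in a region that hugs this half-space near $p$. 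Now I would compose: map $H$ biholomorphically to $B^n$ by a Cayley transform $\Phi$ normalized so that the point $z$ (which is tending to $p$, hence tending to the boundary of $H$) goes to $0$. The key computation is to track what happens to the inscribed ball $B_{\mathrm{in}}$ under $\Phi$: since $\Phi$ is a biholomorphism of $H$ onto $B^n$ sending a point near $\partial H$ to the center, and $B_{\mathrm{in}}$ is internally tangent to $\partial H$ at $p$ with $z\to p$, the image $\Phi(B_{\mathrm{in}})$ contains a ball $B^n(0,r_z)$ with $r_z\to 1$ as $z\to p$; meanwhile $\Phi(D)\subset\Phi(H)=B^n$. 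Taking $f=\Phi$ (restricted to $D$, post-composed with a rotation to center at $z$) gives $s_D(z)\ge s_D(z,f)\ge r_z\to 1$, and since $s_D\le 1$ always, the limit is $1$.

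The step I expect to be the main obstacle is making the two tangency estimates quantitative and compatible — i.e.\ showing that the inscribed ball $B_{\mathrm{in}}$ and the circumscribed half-space $H$ both touch $\partial D$ at $p$ to high enough order that, after the Cayley transform sending $z\to 0$, the inner radius $r_z$ and the (trivial) outer radius $1$ pinch together. The delicate point is that strong convexity is a $C^2$, second-order condition in the \emph{real} sense, whereas the Cayley transform is tuned to the \emph{complex} normal direction; one must check that the non-normal (complex tangential) directions, where $\partial D$ curves away only to second order, do not spoil the estimate. I would handle this by writing $\partial D$ near $p$ as $\mathrm{Re}\,z_n = Q(z') + o(|z'|^2)$ with $Q$ a positive-definite real quadratic form (strong convexity), choosing the inscribed ball with curvature slightly larger than the largest eigenvalue of $Q$, and then estimating $|\Phi(w)|$ for $w\in B_{\mathrm{in}}$ directly in these coordinates, using that $z=(0,\dots,0,t)$ with $t\to 0^+$ so the Cayley map is essentially $w_n\mapsto (w_n-t)/(w_n+t)$ in the $n$-th coordinate and a mild rescaling in the others. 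The global hypotheses (boundedness, $\bar D\cap T_p\partial D=\{p\}$) enter precisely to guarantee that the part of $D$ outside a small neighborhood of $p$ maps well inside $B^n$ and does not interfere, so that only the local picture near $p$ governs the limit.
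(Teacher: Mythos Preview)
Your plan has a genuine gap, and in fact the obstacle you yourself flag as ``the main obstacle'' turns out to be fatal to the approach as stated.

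First, a point of correction: the real half-space $H=\{\mathrm{Re}\,z_n>0\}\subset\mathbb C^n$ is \emph{not} biholomorphic to $B^n$ when $n\ge 2$ (it contains affine complex lines, so it is not even Kobayashi hyperbolic). The Cayley transform you have in mind sends the \emph{Siegel} domain $\{\mathrm{Im}\,z_n>|z'|^2\}$ to $B^n$, not the real half-space. The natural fix is to replace $H$ by a large Euclidean ball $B_{\mathrm{out}}$ containing $D$ and tangent to $\partial D$ at $p$; such a ball exists precisely because $p$ is g.s.c.\ (this is equivalent to $e_D(p)<\infty$ in the paper's notation).

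But even after this fix the key step fails. Normalize so that $B_{\mathrm{out}}=B^n$, $p=e_1$, $B_{\mathrm{in}}$ has radius $r<1$ and is internally tangent at $p$, and let $z=(1-t)e_1$ with $t\to 0^+$. Using the identity
\[
1-|\Phi_z(w)|^2=\frac{(1-|z|^2)(1-|w|^2)}{|1-\langle w,z\rangle|^2}
\]
one computes directly that
\[
\min_{w\in\partial B_{\mathrm{in}}}|\Phi_z(w)|\ \longrightarrow\ \sqrt{r}\qquad (t\to 0^+),
\]
the minimum being approached along complex-tangential directions. Hence the automorphism $\Phi_z$ of $B_{\mathrm{out}}$ yields only $s_D(z,\Phi_z)\to\sqrt{r}$, not $1$. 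This is exactly the anisotropy you anticipated: ball automorphisms scale like $t$ in the complex normal direction but like $\sqrt t$ in complex-tangential directions, while the inscribed \emph{real} ball $B_{\mathrm{in}}$ curves at the same second-order rate in every real direction. No choice of a single inscribed ball repairs this, since the discrepancy persists whenever $r<1$.

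The paper circumvents this by a different mechanism. Rather than a single ball automorphism, it invokes the construction of Fridman--Ma \cite{F-M}: a finite composition of shears, anisotropic dilations and ball automorphisms that iteratively normalize the second-order jet of $\partial D$ at $p$ until the transformed domain osculates the model ball well enough that the squeezing function tends to $1$ along the inward normal. The g.s.c.\ hypothesis enters through the upper semi-continuity of $e_D$ on $\partial D$ (Proposition~\ref{prop:semi-conti. of B_D}), which guarantees that this construction can be carried out at all nearby boundary points $q$ with uniformly controlled parameters; a smooth-family and inverse-function-theorem argument then upgrades convergence along normals to convergence for arbitrary approach $z\to p$. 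Your local picture is accurate, but the passage from it to $s_D(z)\to 1$ requires this extra normalization machinery, not merely a Cayley-type map.
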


Note that squeezing functions are invariant under biholomorphic
transformations.
By Forn{\ae}ss and Wold's result (Theorem \ref{thm:h.g.s.c}),
we get the following

\begin{thm}\label{thm:h.g.s.c text}
Let $D\subset\mathbb C^n$ be a bounded strongly pseudoconvex
domain with $C^2$-smooth boundary.
Then $\lim_{z\rightarrow \partial D}s_D(z)=1$.
\end{thm}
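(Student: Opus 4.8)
The plan is to reduce Theorem~\ref{thm:g.s.c text} (the g.s.c.\ case) to an explicit model computation on a ball that osculates $D$ at $p$. Fix a g.s.c.\ boundary point $p$ and choose Euclidean coordinates centred so that $p=0$ and $T_p\partial D = \{\mathrm{Re}\,z_n = 0\}$, with $D$ lying (locally) in $\{\mathrm{Re}\,z_n < 0\}$. Since $\partial D$ is $C^2$-smooth and strongly convex at $p$, after a real-linear change one can sandwich $\partial D$ near $p$ between two spheres tangent to $T_p\partial D$ at $p$: there are radii $0<\rho_1<\rho_2$ and a neighbourhood $U$ of $p$ with $D\cap U \subset B(c_2,\rho_2)$ and $B(c_1,\rho_1)\cap U \subset D$, where $c_1,c_2$ lie on the inner normal line. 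The global hypothesis $\bar D \cap T_p\partial D = \{p\}$ is what upgrades this to a genuinely \emph{global} comparison: it forces $\bar D \setminus U$ to stay a positive Euclidean distance away from $T_p\partial D$, hence inside a half-space $\{\mathrm{Re}\,z_n < -\eta\}$ for some $\eta>0$.

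Next I would build, for a point $z$ approaching $p$ along the inner normal (the general case follows by the same estimate plus a routine argument comparing nearby points, or by noting $s_D$ is continuous), an explicit embedding $f_z:D\to B^n$ with $f_z(z)=0$ whose image contains a ball of radius close to $1$. The natural candidate is a Cayley-type map sending the osculating ball $B(c_2,\rho_2)$ biholomorphically to $B^n$ and $z\mapsto 0$; composed with this, $D$ maps into $B^n$. One then has to check that the image contains $B^n(0,r)$ with $r\to 1$ as $z\to p$. For the part of $\partial D$ inside $U$, strong convexity gives that $\partial D$ is squeezed between the two osculating spheres, so the image of $D\cap U$ fills up almost all of $B^n$ near the boundary point $f_z(p)$; the inner sphere $B(c_1,\rho_1)$ produces the lower bound $r\ge \rho_1/\rho_2 \cdot(1-o(1))$, and since $\rho_1/\rho_2 \to 1$ as $U$ shrinks (the two osculating radii at a $C^2$ strongly convex point agree in the limit) one gets $r\to 1$. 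For the part of $D$ outside $U$, the half-space estimate $\mathrm{Re}\,z_n<-\eta$ together with the fact that $z\to p$ shows $f_z(D\setminus U)$ is pushed toward the antipode of $f_z(p)$ and stays well inside $B^n$, so it does not obstruct the ball $B^n(0,r)$.

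The main obstacle, and the step deserving the most care, is making the ``$r\to 1$'' quantitative and uniform: one must control the distortion of the Cayley map near the osculating point and show precisely that the combination of the inner-sphere bound (local, near $p$) and the half-space bound (global, away from $p$) yields a single radius tending to $1$, rather than two competing bounds one of which degrades. Concretely, the Cayley map is nearly an isometry in the ``radial'' direction but contracts tangentially like the distance to $f_z(p)$, so one needs the geometry of $D\setminus U$ — captured entirely by the constant $\eta$ coming from $\bar D\cap T_p\partial D=\{p\}$ — to dominate. Once this uniform estimate gives $\liminf_{z\to p} s_D(z)\ge 1$, the reverse inequality $s_D\le 1$ is automatic from the definition, proving Theorem~\ref{thm:g.s.c text}. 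Theorem~\ref{thm:h.g.s.c text} then follows immediately: given a bounded strongly pseudoconvex $D$ with $C^2$ boundary and $p\in\partial D$, Theorem~\ref{thm:h.g.s.c} supplies a biholomorphism $f$ on a neighbourhood of $\bar D$ making $f(p)$ a g.s.c.\ point of $f(D)$; biholomorphic invariance of the squeezing function gives $s_D(z)=s_{f(D)}(f(z))\to 1$ as $z\to p$, and since $\partial D$ is compact the convergence is uniform, so $\lim_{z\to\partial D}s_D(z)=1$.
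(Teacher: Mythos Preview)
Your reduction of Theorem~\ref{thm:h.g.s.c text} to Theorem~\ref{thm:g.s.c text} via Forn{\ae}ss--Wold, biholomorphic invariance, and compactness of $\partial D$ is exactly what the paper does, and is correct.

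The gap is in your sketch of Theorem~\ref{thm:g.s.c text}. The assertion that ``$\rho_1/\rho_2\to 1$ as $U$ shrinks (the two osculating radii at a $C^2$ strongly convex point agree in the limit)'' is false. At a strongly convex $C^2$ boundary point the surface is approximated to second order by the graph of a positive definite quadratic form $Q$, not by a sphere; the best inner and outer tangent balls have radii $\rho_1=1/(2\lambda_{\max}(Q))$ and $\rho_2=1/(2\lambda_{\min}(Q))$, so $\rho_1/\rho_2=\lambda_{\min}/\lambda_{\max}$ is a fixed number $<1$ (unless $p$ is umbilic) and does not improve as the neighbourhood shrinks. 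A preliminary ``real-linear change'' does not help: only holomorphic maps preserve $s_D$, and a complex-linear map cannot in general make the real Hessian of the defining function a multiple of the identity (the pluriharmonic second-order terms $\mathrm{Re}\sum a_{ij}z_iz_j$ are an obstruction). Worse, the Cayley automorphism of the outer ball is anisotropic in $n\geq 2$: if $z=(1-t)e_1$ and $\phi_z$ sends $z\mapsto 0$, then $\phi_z(0,r e_2)=\big((1-t),\,\sqrt{2t-t^2}\,r e_2\big)$, and requiring this to lie in the inner ball $|w-(1-\rho)e_1|<\rho$ forces $r^2\lesssim\rho$. So your scheme yields at best $\liminf_{z\to p}s_D(z)\geq\sqrt{\rho_1/\rho_2}$, not $1$.

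This is precisely why the paper does not use an osculating-ball comparison. Following Fridman--Ma, it applies a finite sequence of explicit biholomorphic maps (Steps~1--3 in the proof) that successively kill the cross terms $c_j z_1\bar z_j$, the terms $b_j z_1 z_j$, and the pure terms $a_{ij}z_iz_j$ in the Taylor expansion of the defining function, so that after these transformations the domain really is squeezed between two balls with ratio close to $1$; only then does the normal-direction estimate $s_{D'}\to 1$ go through. The paper then spends the remaining effort showing these transformations depend smoothly on the boundary point (using Proposition~\ref{prop:semi-conti. of B_D} to make the construction uniform near $p$) so that the normal-direction estimate upgrades to a full neighbourhood estimate. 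Your half-space control of $D\setminus U$ is the right idea for the global part, but it cannot compensate for the missing local normalization.
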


With Theorem \ref{thm:h.g.s.c text},
we can a different proof of B. Wong's result
on characterization of the unit ball by its symmetry
from strongly pseudoconvex domains,
based on the original idea of localization of Wong.

To prove Theorem \ref{thm:g.s.c text},
we need to introduce a new function $e_D$ on $\partial D$.
Let $D$ be a bounded domain in $\mathbb{C}^n$
and $p\in\partial D$.
If $D$ is $C^2$-smoothly bounded at $p$ and
contained in some ball in $\mathbb{C}^n$ with boundary point $p$,
then $e_D(p)$ is defined to be the minimum of the radii of balls
with boundary point $p$ which contain $D$.
If $D$ is not $C^2$-smoothly bounded at $p$ or
no ball with boundary point $p$ can contain $D$,
we set $e_D(p)=+\infty$.
By definition,
it is clear that $p$ is a g.s.c boundary point of $D$
if and only if $e_D(p)<\infty$.
The following proposition is an important step
in our proof of Theorem \ref{thm:g.s.c text}.

\begin{prop}\label{prop:semi-conti. of B_D}
Let $D$ be a bounded domain.
Then $e_D$ is upper semi-continuous on $\partial D$.
\end{prop}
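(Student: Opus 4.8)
The plan is to prove upper semicontinuity directly: fixing $p\in\partial D$, I will show $\limsup_{q\to p}e_D(q)\le e_D(p)$. If $e_D(p)=+\infty$ there is nothing to prove, so assume $p$ is a g.s.c.\ point with $r:=e_D(p)<\infty$. First I note that the infimum defining $e_D(p)$ is attained: a minimizing sequence of balls through $p$ containing $D$ has bounded centers, and both the containment $\bar D\subset\overline{B}$ and the condition $p\in\partial B$ are closed, so a subsequential limit gives a ball $B=B(c,r)$ with $\bar D\subset\overline{B}$ and $p\in\partial B$. Since $\partial D$ is $C^2$ near $p$ and $D$ lies on the inner side of the tangent sphere, the tangent planes of $\partial D$ and $\partial B$ at $p$ coincide, so $c=p-r\,\nu(p)$, where $\nu$ denotes the outer unit normal of $\partial D$. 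Moreover every principal curvature of $\partial D$ at $p$ is $\ge 1/r$, for otherwise $\partial D$ would bulge outside $\partial B$ near $p$, contradicting $D\subset B$. The goal then reduces to: for each $\epsilon>0$ produce a neighborhood $U$ of $p$ with $e_D(q)\le r+\epsilon$ for all $q\in\partial D\cap U$; letting $\epsilon\to 0$ gives $\limsup_{q\to p}e_D(q)\le r$.

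For the construction I use the radius-$(r+\epsilon)$ balls osculating $\partial D$ from inside. For $q\in\partial D$ near $p$ set $\hat c_q=q-(r+\epsilon)\,\nu(q)$ and $\hat B_q=B(\hat c_q,\,r+\epsilon)$, so $q\in\partial\hat B_q$ automatically; it suffices to show $\bar D\subset\overline{\hat B_q}$, which forces $e_D(q)\le r+\epsilon$. Since $x\mapsto|x-\hat c_q|$ is convex, its maximum over the compact set $\bar D$ is attained on $\partial D$, so I only need to verify the containment for $x\in\partial D$. I compare $\hat B_q$ with the reference ball $\hat B:=B\bigl(p-(r+\epsilon)\nu(p),\,r+\epsilon\bigr)$. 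Because $\hat B$ and $B(c,r)$ are internally tangent at $p$ with radii differing by exactly $\epsilon$, one has $\overline{B(c,r)}\subset\overline{\hat B}$ with the two spheres meeting only at $p$; together with $\bar D\subset\overline{B(c,r)}$ this yields the key strict containment $\bar D\setminus\{p\}\subset\hat B$. On the \emph{far part} $K_\eta=\{x\in\partial D:|x-p|\ge\eta\}$, which is compact and lies in the open ball $\hat B$, the distance to $\partial\hat B$ is bounded below by some $2\delta>0$; as $\hat c_q\to p-(r+\epsilon)\nu(p)$ when $q\to p$ (continuity of $\nu$), once $|\hat c_q-\hat c|<\delta$ every $x\in K_\eta$ satisfies $|x-\hat c_q|<r+\epsilon$. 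Thus the far part lies in $\hat B_q$ for $q$ near $p$.

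The \emph{near part} $N_\eta=\{x\in\partial D:|x-p|\le\eta\}$ is the crux, and here the curvature bound enters. Containment $x\in\overline{\hat B_q}$ is equivalent to $\langle x-q,\nu(q)\rangle\le -|x-q|^2/\bigl(2(r+\epsilon)\bigr)$, with $\langle\cdot,\cdot\rangle$ the real inner product. Since the principal curvatures of $\partial D$ at $p$ are $\ge 1/r$ and the second fundamental form is continuous ($\partial D$ is $C^2$), on a small enough neighborhood of $p$ all principal curvatures exceed $1/(r+\epsilon)$, there being room because $1/r>1/(r+\epsilon)$. A standard interior (rolling) ball estimate for a $C^2$ hypersurface with curvatures $\ge 1/(r+\epsilon)$ then supplies, for $q$ near $p$, exactly the displayed inequality for all $x\in\partial D$ in a fixed neighborhood of $p$; shrinking $\eta$ so that $N_\eta$ lies in that neighborhood gives $N_\eta\subset\overline{\hat B_q}$. \textbf{The main obstacle} is to make this rolling-ball estimate uniform in $q$ over a whole neighborhood of $p$ and valid for all $x$ at finite distance $\le\eta$, rather than as a purely second-order osculation statement at a single point; this is precisely where the strict gap $1/r>1/(r+\epsilon)$ provides the slack needed to absorb the $C^2$ error terms.

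Combining the two parts, for $q\in\partial D$ sufficiently close to $p$ one has $\partial D=K_\eta\cup N_\eta\subset\overline{\hat B_q}$, hence $\bar D\subset\overline{\hat B_q}$ and $e_D(q)\le r+\epsilon$. Letting $q\to p$ gives $\limsup_{q\to p}e_D(q)\le r+\epsilon$, and then $\epsilon\to 0$ yields $\limsup_{q\to p}e_D(q)\le r=e_D(p)$, which is the asserted upper semicontinuity at $p$. The argument is purely real-geometric, using only the $C^2$ strong convexity at $p$ and the containment $D\subset B(c,r)$; the far-part estimate and the reductions are routine, and the near-part estimate is the only delicate point.
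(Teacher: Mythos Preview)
Your proposal is correct and follows essentially the same approach as the paper: split into a ``near'' part handled by second-order comparison at the boundary and a ``far'' part handled by compactness and continuity of the candidate ball in $q$. The only cosmetic difference is that the paper phrases the near-part estimate as a Hessian comparison between the local defining function $\rho$ and the defining function $f_r(z,q)=\bigl(|z-(q-r\nabla\rho(q))|^2-r^2\bigr)/(2r)$ of the candidate ball, carrying out the Taylor expansion explicitly with continuous remainder coefficients $h_{i,j}(z,q)$, whereas you phrase the same thing in the language of principal curvatures and a rolling-ball inequality; your ``strict gap $1/r>1/(r+\epsilon)$ absorbs the $C^2$ error terms'' is exactly the paper's ``$\mathrm{Hess}(\rho)(p)>c\,\mathrm{Hess}(f_r)|_{z=p}$ for some $c>1$, hence on a neighborhood by continuity.''
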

\begin{proof}

For $r>0$ and $q\in \partial D\cap U$, let $B_{q,r}$ be the ball defined by
$$|z-(q-r\nabla\rho(q))|^2<r^2.$$
Let $r>e_D(p)$ be fixed,  we want to prove that, for some neighborhood $V\subset U$ of $p$, $D\subset B_{q,r}$ for all $q\in \partial D\cap V$.
Let
$$f_r(z,q)=\frac{|z-(q-r\nabla\rho(q))|^2-r^2}{2r}.$$
By assumption, we can choose a local defining function $\rho$ of $D$ near $p$ such that
$||\nabla \rho||\equiv 1$ and $Hess(\rho)(p)> cHess(f_r(z,p))|_{z=p}$ for some $c>1$.
By continuity, there is a neighborhood $W$ of $p$ such that
\be\label{eqn:comp. of Hessen}
Hess(\rho)(q)> cHess(f_r(z,q))|_{z=q}
\ee
for $q\in \partial D\cap W$.
We may assume $W$ is convex and small enough. Then, for any fixed $q\in \partial D\cap W$,  we have
$$\rho(z)=\Delta x\cdot\nabla\rho(q)+\sum_{i,j=1}^{2n}h_{i,j}(z,q)\Delta x_i\Delta x_j,$$
where $\Delta x=(\Delta x_1,\cdots,\Delta x_{2n})=z-q$ is viewed as a vector in $\mathbb{R}^{2n}$.
The key point here is that all $h_{i,j}(z,q)$ are continuous on $W\times (W\cap\partial D)$,
and $h_{i,j}(q,q)=\frac{\partial^2\rho}{\partial x_i\partial x_j}(q).$
By \eqref{eqn:comp. of Hessen}, replacing $W$ by a small enough relatively open subset of it, we have
$$\rho(z)-f_r(z,q)=\sum_{i,j=1}^{2n}h_{i,j}(z,q)\Delta x_i\Delta x_j-\Delta x Hess(f_r(z,q))|_{z=q}\Delta x^T>0$$
for $(z,q)\in W\times (W\cap\partial D).$
This implies that $W\subset B_{q,r}$ for all $q\in \partial D\cap W$.

On the other hand, it is clear that there is an open subset $V$ of $W$ such that $D-W\subset B_{q,r}$ for all $q\in \partial D\cap V$.
So, for all $q\in \partial D\cap V$, we have $D\subset B_{q,r}$.
This implies $e_D(q)\leq r$. Let $r\searrow e_D(p)$,
we see that $e_D$ is upper semi-continuous at $p$.

\end{proof}

We now give the proof of Theorem \ref{thm:g.s.c text}:

\begin{proof}
%Firstly we need to sketch the argument in \cite{F-M}.

By scaling if necessary, we can assume $e_D(p)\leq 1.$
Let
$$B=\{z\in\mathbb{C}^n: 2 \text{Re} z_1+\sum_{j=1}^n |z_i|^2<0\}$$
be the ball of radius 1 and centered at $(-1, 0, \cdots, 0)$.
It was shown in \cite{F-M} that there a series of biholomorphic transformations
that map $D$ to a domain, say, $D'$ in $B$ and map $p$ to the origin $0$,
such that $\bar{D'}\cap \partial B=0$ and
\begin{equation}\label{eqn:trans. Conveg.}
\lim_{r\rightarrow 0}s_{D'}(r, 0, \cdots, 0)=1.
\end{equation}
In other words, $s_{D'}(z)$ tends to $1$ as $z$ tends to $0$ from the normal direction
of $\partial D'$ at the origin.
We assume $D\subset B$ and $p$ is the origin.
%The first holomorphic transformation involved is
%$H_{c,N}:\mathbb{C}^n\rightarrow\mathbb{C}^n$ given by
%$$H_1(z)=z_1,\ h_j(z)=z_j+(c_j(p)/(2N_j(p)))z_1, j=2,\cdots, n,$$
%where the constants $c_j(p)$ and $N_j(p)$ are eigenvalues of the Hessian of
%the defining function $\rho(z)$ of $D$ near $p=0$ that can be written as
%\begin{equation}
%\begin{split}
%\rho(z)=&2Rez_1+Re\sum_{i,j=1}^n a_{ij}(p)z_iz_j\\
%        &+Re\sum_{j=1}^nc_j(p)z_1\bar z_j+\sum_{j=2}^n N_j(p)|z_j|^2+o(|z|^2),
%\end{split}
%\end{equation}
%after a unitary transformation if necessary.
%The second class of holomorphic transformations involved are
%$f_\epsilon: B\rightarrow B$ ($\epsilon\in\mathbb{R}^+$) given by:
%$$w_1=\frac{\epsilon z_1}{2-\epsilon+(1-\epsilon)z_1},\
%w'=\frac{\sqrt{\epsilon(2-\epsilon)}}{2-\epsilon+(1-\epsilon)z_1}z'.$$
%The third class of holomorphic transformations involved
%are $F_b:\mathbb{C}^n\rightarrow \mathbb{C}^n$ given by
%$$F_b(z_1, z')=\left(z_1+\sum_{i,j=2}^n b_{ij}z_iz_j,\ z' \right),$$
%where $b_{ij}$ are certain complex numbers.
%The forth holomorphic transformation involved is the scaling map $h:\mathbb{C}^n\rightarrow \mathbb{C}^n$
%given by $h(z_1, z')= \left(\frac{14}{15}z_1, \sqrt{\frac{14}{15}z'}\right).$
Then the process of transformation is as follows.

\bigskip
\emph{Step 1.}
After a unitary transformation if necessary,
we can assume the defining function $\rho(z)$ of $D$ near $p=0$ can be written as
\begin{equation}\label{eqn:def. function}
\begin{split}
\rho(z)=&2Rez_1+Re\sum_{i,j=1}^n a_{ij}(p)z_iz_j\\
        &Re\sum_{j=1}^nc_j(p)z_1\bar z_j+\sum_{j=2}^n N_j(p)|z_j|^2+o(|z|^2).
\end{split}
\end{equation}
Let $H_{c,N}:\mathbb{C}^n\rightarrow\mathbb{C}^n$ be the biholomorphic map given by
$$H_1(z)=z_1,\ H_j(z)=z_j+(c_j(p)/(2N_j(p)))z_1, j=2,\cdots, n.$$
Shrinking $H_{c,N}(D)$ in all directions and rescaling it in the $z'$ directions,
we obtain a domain $D_1\subset B$ whose defining function near the origin is of the form
\begin{equation}
\begin{split}
\rho'(z)=&2Rez_1+2Re\sum_{j=1}^n b_jz_1z_j\\
        &+Re\sum_{i,j=1}^na_{ij}z_i z_j+d|z_1|^2+M|z'|^2+ o(|z|^2).
\end{split}
\end{equation}

\bigskip

\emph{Step 2.} For $\epsilon>0$, we define a biholomorphic map $f_\epsilon:B\rightarrow B$ as:
$$w_1=\frac{\epsilon z_1}{2-\epsilon+(1-\epsilon)z_1},\
w'=\frac{\sqrt{\epsilon(2-\epsilon)}}{2-\epsilon+(1-\epsilon)z_1}z'.$$
For $\epsilon$ small enough such that
$$b'_1=\frac{\epsilon}{2-\epsilon}b_1,\ b'_j=\sqrt{\frac{\epsilon}{2-\epsilon}}b_j$$
satisfy the condition
\begin{equation}
\sum_{j=1}^n|b_j|\leq\lambda,
\end{equation}
where $\lambda$ is a given uniform constant,
then the domain $\tilde{D}=G_{b'}(f^{-1}_\epsilon(D_1))$ has a defining function near 0 of the form
$$\rho''(z)=2Rez_1+Re\sum_{i,j=1}^n a_{ij}z_iz_j
       +d|z_1|^2+M|z'|^2+ o(|z|^2),$$
where the map $G_{b'}$ is defined as $G_{b'}(z)=(z_1+\sum_{j=1}^n b'_jz_1z_j, z')$.
We choose an integer $N> 8M$ and set $D_2=\frac{M}{N}\tilde{D}$.

\bigskip
\emph{Step 3.} Let $a_{ij}(p)$ as in Step 1, and set
$$b_{ij}=\frac{1}{4(N-4-k/2)}a_{ij}(p), 2\leq i, j\leq n,$$
where $k<2(N-4)$ is a constant.
We choose $\epsilon>0$ small enough and set $D_3=h\circ F_b \circ f^{-1}_\epsilon(D_2)$,
where $F_b:\mathbb{C}^n\rightarrow \mathbb{C}^n$ is given by
$$F_b(z_1, z')=\left(z_1+\sum_{i,j=2}^n b_{ij}z_iz_j,\ z' \right),$$
and $h:\mathbb{C}^n\rightarrow \mathbb{C}^n$ is
given by $h(z_1, z')= \left(\frac{14}{15}z_1, \sqrt{\frac{14}{15}z'}\right).$

Repeat Step 3 several times ($\leq 2(N-4)$) if necessary,
we get a domain, say $D'$, whose squeezing function satisfies the property
stated in the beginning of the proof.

\bigskip

Now we move forward to prove that $\lim_{z\rightarrow p}s_D(z)=1$.
Let $$l_p=\{(-r, 0')\in \bar D': 0\leq r<\delta_p\},$$
where $\delta_p>0$ very small.
We have seen that $s_{D'}(z)$ converges to 1 when $z$ tends to
the origin along $l_p$.

Let $l^-_p$ be the inverse image of $l_p$ in $D$
under the series of transformations given in the
above steps.
Then $l^-_p$ is a smooth curve in $\bar D$ through $p$.
Since all the transformations in the above steps
are defined in some neighborhood of the closure of the domains
involved,
it is clear that there is a constant $c_p>1$ such that
\begin{equation}\label{eqn:distance comparison}
c^{-1}_p\leq \frac{d(z,p)}{r(z)}\leq c_p
\end{equation}
for all $z\in l_p^-$,
where $d(z , p)$ is the length of the part of $l_p^-$
between $z$ and $p$,
and $(-r(z), 0')\in l_p$ is the image of $z$.

The speed of the convergence of \eqref{eqn:trans. Conveg.} depends on
the eigenvalues  $c_j(p)$ and $N_j(p)$ of the local defining function
that given in \eqref{eqn:def. function}.
However, a direct computation shows that
the convergence is uniform if $c_j$ and $N_j$
lie in a bounded set.

We can do the same process for other boundary points near $p$.
Note that $e_D(p)<1$, by Proposition \ref{prop:semi-conti. of B_D},
there is a neighborhood $U$ of $p$ in $\partial D$
such that $e_D(q)<1$ for all $q\in U$.
Repeat the above process,
we get a smooth curve $l^-_q$ of length $\geq \delta_q/c_q$ in $\bar D$ though $q$
such that $s_D(z)$ tends to $1$ as $z$ tends to $q$ along $l^-_q.$

%For each $q\in U$, we denote the inner unit vector at $q$ determined
%by $l^-_q$ by $n_q$. Then we get a vector field, say $X$, on $U$.
%we want to show that $X$ is a smooth vector field.
We need to check how the above Steps depend on $p$.
In Step 1, we first meet the map $H$, which smoothly
depends on parameters $c_j(p)$ and $N_j(p)$.
By the Gram-Schmidt process,
we see that $c_j(q)$ and $N_j(q)$
can smoothly vary with respect to $q\in U$.
Moreover this implies that
the shrinking and rescaling appeared
in Step 1 can be taken to be uniform for $q\in U$.
This also implies that the positive numbers $\epsilon$
appearing in Step 2 and Step 3 can be taken independent of $q\in U$.
So, it is clear that all other transformations appearing
in Step 2 and Step 3 are also smoothly depend on $q\in U$.
Moreover,
for $q\in U$, $c_j(q)$ and $N_j(q)$ vary in a compact set
in $\mathbb{C}$.
One can also see that the first and second derivatives of
these transformations are continuous,
so $\delta_q>0$ and $c_q>0$
can be taken independent of $q$.

We choose $\delta>0$ and define a map
$\varphi:[0,\delta)\times U\rightarrow \bar D$ such that
$\varphi(t, q)$ is the unique point in $l_q^{-}$ with $d(\varphi(t, q), q)=t$.
By the above discussion, $\varphi$ is a smooth map.
It is clear that the tangent vector of $l^-_p$ at $p$
is not tangent to $\partial D$.
So the differential of $\varphi$ at $p$ is a linear isomorphism.
By the inverse function theorem,
$\varphi$ is a local diffeomorphism near $p$.
Without loss of generality,
we may assume $\varphi:[0,\delta)\times U\rightarrow \varphi([0,\delta)\times U)$ is a diffeomorphism.
Hence, for each $z\in \varphi((0,\delta)\times U)$,
there is a unique $q_z\in U$ such that $z\in l^-_{q_z}$,
and $d(z , q_z)$ tends to 0 uniformly
as $z\rightarrow \partial D$ uniformly.
By the above discussion, we see $\lim_{z\rightarrow p}s_D(z)=1$.
\end{proof}

\begin{cor}[\cite{Wong}]\label{cor:Wong ball}
Let $D$ a bounded strongly pseudoconvex
domain in $\mathbb{C}^n$ with $C^2$-smooth boundary.
If the automorphism group $Aut(D)$
of $D$ is noncompact,
then $D$ is biholomorphic to the unit ball.
\end{cor}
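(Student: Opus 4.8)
The plan is to use Theorem~\ref{thm:h.g.s.c text} --- which gives $\lim_{z\to\partial D}s_D(z)=1$ for our domain $D$ --- together with the biholomorphic invariance of the squeezing function. The point is that noncompactness of $Aut(D)$ produces an orbit in $D$ accumulating at $\partial D$; along such an orbit the value of $s_D$ is constant, yet by Theorem~\ref{thm:h.g.s.c text} it must tend to $1$. Hence $s_D$ equals $1$ at an interior point, and the extremal map there (which exists by \cite{DGZ}) is then a biholomorphism of $D$ onto the unit ball.

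In detail I would proceed in three steps. \emph{Step 1: produce a boundary orbit.} By a classical theorem of H.~Cartan on automorphism groups of bounded domains, noncompactness of $Aut(D)$ yields a point $z_0\in D$, a point $q\in\partial D$, and automorphisms $\varphi_j\in Aut(D)$ with $\varphi_j(z_0)\to q$. If a self-contained argument is wanted: fix $z_0$; were the orbit $\{\varphi(z_0):\varphi\in Aut(D)\}$ relatively compact in $D$, then by Montel's theorem any sequence in $Aut(D)$ would subconverge, uniformly on compacta, to a holomorphic self-map $\psi$ of $D$ with $\psi(z_0)\in D$, and Cartan's theorem would then force $\psi\in Aut(D)$; running the same argument on the inverses shows $Aut(D)$ is sequentially compact, contradicting the hypothesis. \emph{Step 2: force $s_D(z_0)=1$.} By biholomorphic invariance $s_D(z_0)=s_D(\varphi_j(z_0))$ for all $j$; since $\varphi_j(z_0)\to q\in\partial D$, Theorem~\ref{thm:h.g.s.c text} gives $s_D(\varphi_j(z_0))\to 1$, whence $s_D(z_0)=1$. \emph{Step 3: the extremal map is onto.} By the existence of extremal maps (Theorem~2.1 in \cite{DGZ}) there is a holomorphic embedding $f:D\to B^n$ with $f(z_0)=0$ and $s_D(z_0,f)=s_D(z_0)=1$, i.e. $B^n(0,r)\subset f(D)$ for every $r<1$; therefore $B^n=\bigcup_{r<1}B^n(0,r)\subseteq f(D)\subseteq B^n$, so $f(D)=B^n$ and $f$ is a biholomorphism from $D$ onto the unit ball.

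The genuinely difficult content --- a global normalization of the boundary near a given point and the resulting uniform estimate $s_D\to 1$ --- has already been carried out in Theorem~\ref{thm:h.g.s.c text} (which rests on Theorem~\ref{thm:g.s.c text} and the Forn{\ae}ss--Wold theorem, Theorem~\ref{thm:h.g.s.c}), so the remaining work here is soft. The only point that warrants care is Step~1: one must correctly invoke (or reprove, as sketched) Cartan's dichotomy that a locally uniform limit of automorphisms of a bounded domain is either an automorphism or carries $D$ into $\partial D$, and then exclude the second alternative using that the orbit of $z_0$ would otherwise have to leave every compact subset of $D$. I expect this --- rather than any new estimate --- to be the only subtlety; in particular, in contrast with the classical Wong--Rosay argument, no rescaling or localization of $D$ near $q$ is needed, since the boundary behavior of the squeezing function already encodes it.
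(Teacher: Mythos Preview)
Your proposal is correct and follows essentially the same approach as the paper: use noncompactness of $Aut(D)$ to find an orbit accumulating at $\partial D$, invoke biholomorphic invariance together with Theorem~\ref{thm:h.g.s.c text} to conclude $s_D(z_0)=1$, and then appeal to the existence of an extremal map (Theorem~2.1 in \cite{DGZ}) to obtain a biholomorphism onto $B^n$. The paper's proof is just more terse, omitting the Cartan-type justification in your Step~1 and the explicit unpacking of why $s_D(z_0)=1$ forces the extremal map to be onto.
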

\begin{proof}
If $Aut(D)$ is noncompact,
then, for any $z\in D$, there is a sequence $f_j\in Aut(D)$ and a point $p\in\partial D$
such that $\lim_{j\rightarrow \infty}f_j(z)\rightarrow p$.
By the holomorphic invariance of squeezing functions,
$s_D(z)=s_D(f_j(z))$ for all $j$.
By Theorem \ref{thm:h.g.s.c text},
$\lim_{j\rightarrow \infty}s_D(f_j(z))=1$.
Hence $s_D(z)=1$.
By Theorem 2.1 in \cite{DGZ},
$D$ is biholomorphic to the unit ball.
\end{proof}

\section{Applications}\label{sec:application}
\subsection{Geometry of Cartan-Hartogs domains}\label{sec:Cartan-Hartogs}
In this subsection,
we apply Theorem \ref{thm:g.s.c} to investigate squeezing functions and
geometry of Cartan-Hartogs domains,
which are certain Hartogs domains based on
classical bounded symmetric domains.

Recall that a classical bounded symmetric domain
is a domain of one of the following four types:
\begin{equation*}
\begin{split}
& D_I(r,s)=\{Z=(z_{jk}): I-Z\bar{Z^t}>0,\ \text{where}\ Z\ \text{is\ an}\ r\times s \ \text{matrix} \}\ (r\leq s),\\
& D_{II}(p)=\{Z=(z_{jk}): I-Z\bar{Z^t}>0,\ \text{where}\ Z\ \text{is\ a\ symmetric\ matrix\ of\ order\ }p\},\\
&D_{III}(q)=\{Z=(z_{jk}): I-Z\bar{Z^t}>0,\ \text{where}\ Z\ \text{is\ a\ skew-symmetric\ matrix\ of\ order\ }q\},\\
&D_{IV}(n)=\{Z=(z_1, \cdots , z_n)\in\mathbb{C}^n: 1+|ZZ^t|^2-2Z\bar{Z}^t>0,\ 1-|Z{Z}^t|>0\}.
\end{split}
\end{equation*}

Let $\Omega$ be a classical bounded symmetric domain,
then the Cartan-Hartogs domain $\hat{\Omega}_k$
associated to $\Omega$ is defined to be
\begin{equation}\label{Cartan-Hartogs domain}\hat{\Omega}_k=\{(Z, W)\in\Omega\times\mathbb{C}^m;\parallel W\parallel^2<N(Z,Z)^k\},\end{equation}
where $m$ is a positive integer and $k$ is a positive real number,
$\parallel W\parallel$ is the standard Hermitian norm of $W$,
and the generic norm $N(Z,Z)$ for $D_I{(r,s)}$, $D_{II}(p)$, $D_{III}(q)$, $D_{IV}(n)$
are respectively $\det(I-Z\bar{Z}^t), $ $\det(I-Z\bar{Z}^t)$, $\det(I+Z\bar{Z}^t)$, and $1+|ZZ^t|^2-2Z\bar{Z}^t$.

%Cartan-Hartogs domains were introduced
%by W. Yin and G. Roos in 1998.
In \cite{Yin2000},
Yin computed the automorphism groups and Bergman kernels
of Cartan-Hartogs domains explicitly.
%Yin and Zhang proved the four
%classical invariant metrics-the Carath¡äeodoary metric, the Kobayashi metric, the
%Bergmanmetric and the K\"ahler-Einstein metric are all equivalent when the domains
%are convex [21].
Motivated by Liu-Sun-Yau's work \cite{Liu}, Yin proposed the following
open problem: whether Cartan-Hartogs domains are homogeneous regular \cite{Yin2007}? In
this section, we give an affirmative answer to this question.

\begin{thm}\label{thm:Cartan-Hartogs are Homo. Reg.}
Let $\hat{\Omega}_k$ be a Cartan-Hartogs domain defined as above:
\begin{enumerate}
  \item for any $P_0=(Z_0, W_0)\in\partial \hat{\Omega}_k$,
        if $W_0\neq 0$, then
        $$\lim_{P\rightarrow P_0}s_{\hat{\Omega}_k}(P)=1;$$
  \item $\hat{\Omega}_k$ is homogenous regular.
\end{enumerate}
\end{thm}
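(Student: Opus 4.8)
The plan is to reduce Theorem~\ref{thm:Cartan-Hartogs are Homo. Reg.} to Theorem~\ref{thm:g.s.c} (boundary estimate at g.s.c.\ points), together with the known structure of Cartan--Hartogs domains and the stability/invariance properties of squeezing functions already established. The boundary $\partial\hat\Omega_k$ splits naturally into two pieces: the part where $W_0\neq 0$, and the part where $W_0=0$ (which forces $Z_0\in\partial\Omega$ and in fact $N(Z_0,Z_0)=0$). For part (1), fix $P_0=(Z_0,W_0)$ with $W_0\neq 0$; then $N(Z_0,Z_0)>0$, so near $P_0$ the defining inequality $\|W\|^2<N(Z,Z)^k$ has a smooth, nondegenerate boundary, and the domain is, near $P_0$, biholomorphically modeled on a smoothly bounded domain. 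The first step is to verify that $P_0$ can be made a globally strongly convex boundary point after a biholomorphic (or even affine/polynomial) change of coordinates on a neighborhood of $\overline{\hat\Omega_k}$: one checks that $\partial\hat\Omega_k$ is strongly pseudoconvex at such points (the function $\|W\|^2-N(Z,Z)^k$ is strictly plurisubharmonic there because $N(Z,Z)$ is, up to the relevant power, a defining function of the strongly pseudoconvex symmetric domain $\Omega$, and $\|W\|^2$ contributes positive curvature in the $W$-directions), and then either invokes Theorem~\ref{thm:h.g.s.c} locally or, better, exhibits an explicit supporting ball. Once $P_0$ (or its image) is a g.s.c.\ point, Theorem~\ref{thm:g.s.c} gives $\lim_{P\to P_0}s_{\hat\Omega_k}(P)=1$, and holomorphic invariance transports this back.

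For part (2) we must also control the squeezing function near the "bad" boundary points where $W_0=0$ and $Z_0\in\partial\Omega$, where the boundary is not smooth and the previous argument fails. Here the key idea is to use the Hartogs structure: $\hat\Omega_k$ fibers over $\Omega$ with fiber over $Z$ the ball $\{W:\|W\|^2<N(Z,Z)^k\}$, and the group of automorphisms of $\Omega$ lifts to automorphisms of $\hat\Omega_k$. More precisely, I would use the explicit automorphisms of $\hat\Omega_k$ computed by Yin (\cite{Yin2000}): for $\sigma\in\mathrm{Aut}(\Omega)$ there is a lift to $\mathrm{Aut}(\hat\Omega_k)$ covering $\sigma$ with a suitable scaling in the $W$-variable. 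Given a sequence $P_j=(Z_j,W_j)\to P_0$ with $W_0=0$, one uses $\mathrm{Aut}(\Omega)$ acting transitively on $\Omega$ to move $Z_j$ back to a fixed base point $0\in\Omega$; the lifted automorphisms then move $P_j$ to points $\tilde P_j=(0,\tilde W_j)$ with $\|\tilde W_j\|\to$ (the boundary value $N(0,0)^{k/2}$ after normalization), i.e.\ $\tilde P_j$ stays in a compact subset of the slice $\{0\}\times\{\|W\|<1\}$ lying in the interior, OR converges to a boundary point with $W$-coordinate nonzero — either way, by holomorphic invariance $s_{\hat\Omega_k}(P_j)=s_{\hat\Omega_k}(\tilde P_j)$ is bounded below, either by continuity of the squeezing function on a compact interior set, or by part (1). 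Combined with continuity of $s_{\hat\Omega_k}$ on the (compact) remaining part of $\hat\Omega_k$ away from the boundary, this yields a uniform positive lower bound, i.e.\ homogeneous regularity.

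The main obstacle I anticipate is the rigorous treatment near the degenerate boundary stratum $\{W=0,\ Z\in\partial\Omega\}$: the naive approach via Theorem~\ref{thm:g.s.c} is unavailable because those points are neither smooth nor strongly convex after any coordinate change, so one genuinely needs the automorphism-group reduction, and one must check carefully that Yin's lifted automorphisms do push an arbitrary approach sequence into a region controlled by part (1) or by interior continuity — in particular that the $W$-coordinate does not also degenerate to $0$ after the reduction (this is where the relation $\|W_j\|^2<N(Z_j,Z_j)^k$ and the precise scaling factor in the lift must be used). A secondary, more technical point is verifying in part (1) that the globally-strongly-convex normalization can be arranged on a full neighborhood of $\overline{\hat\Omega_k}$ rather than just locally; this is exactly the content one extracts from Theorem~\ref{thm:h.g.s.c} applied to the strongly pseudoconvex boundary point $P_0$, so it should go through, but the bookkeeping of "which theorem applies where on $\partial\hat\Omega_k$" is the part that requires care.
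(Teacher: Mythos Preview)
Your overall architecture---use the g.s.c.\ boundary estimate (Theorem~\ref{thm:g.s.c}) together with Yin's automorphisms of $\hat\Omega_k$---is exactly the right one, and your treatment of part (2) matches the paper: every interior point is moved by an automorphism to a point of the form $(0,\dots,0,a)$ with $0\le a<1$, so homogeneous regularity reduces to a one-parameter problem on $[0,1)$, where continuity in the interior and part~(1) at the endpoint $a\to 1$ give a uniform positive lower bound.

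The gap is in part~(1). You propose to show that each $P_0=(Z_0,W_0)$ with $W_0\neq 0$ can be made g.s.c.\ by either (a) invoking the Forn{\ae}ss--Wold embedding (Theorem~\ref{thm:h.g.s.c}) or (b) exhibiting an explicit supporting ball. Option (a) does not apply: Theorem~\ref{thm:h.g.s.c} requires $D$ to be strongly pseudoconvex with \emph{globally} $C^2$-smooth boundary, but $\partial\hat\Omega_k$ is singular along the stratum $\{W=0,\ Z\in\partial\Omega\}$. There is no obvious way to localize that theorem here, since the conclusion you need is a biholomorphism defined on a neighborhood of the full closure $\overline{\hat\Omega_k}$. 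Option (b) is not carried out, and for general $\Omega$ and $k$ the domain $\hat\Omega_k$ is not convex, so a direct supporting-ball argument at an arbitrary such $P_0$ would itself require real work.

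The paper sidesteps this by applying the automorphism reduction \emph{already in part~(1)}: given $P_j\to P_0$ with $W_0\neq 0$, one moves $P_j$ by $f_j\in\mathrm{Aut}(\hat\Omega_k)$ to $(0,\dots,0,a_j)$ and checks that $a_j\to 1$. Thus it suffices to verify that the \emph{single} boundary point $(0,\dots,0,1)$ is g.s.c., which the paper does by an explicit Hessian computation of the defining function $X(Z,W)=\|W\|^2/N(Z,Z)^k-1$ at that point (the Hessian is diagonal with entries $2k$ and $2$, and the tangent hyperplane $\{u_m=1\}$ meets $\overline{\hat\Omega_k}$ only at that point). This avoids any appeal to Forn{\ae}ss--Wold and any case-by-case analysis of supporting balls. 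In short: move the automorphism step \emph{before} the g.s.c.\ verification, not after, and then the only analytic input needed is one explicit second-order calculation.
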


\begin{rem}
The same estimate as in (1) of Theorem \ref{thm:Cartan-Hartogs are Homo. Reg.} does not hold
for boundary point $P_0=(Z_0, W_0)\in\partial \hat{\Omega}_k$ with $W_0=0$.
In fact, such $P_0$ is an accumulation  boundary point of $\hat{\Omega}_k$.
If $\lim_{P\rightarrow P_0}s_{\hat{\Omega}_k}(P)=1$,
by the same argument as in the proof of Corollary \ref{cor:Wong ball},
one can prove that $s_{\hat{\Omega}_k}\equiv 1$ and hence
$\hat{\Omega}_k$ is biholomorphic to the unit ball,
which is a contradiction.
\end{rem}

Consequently,
by the work of Yeung in \cite{Yeung} and
the results in \S \ref{sec:comparison. intrinsic form},
we have
\begin{cor}
Let $D$ be a Cartan-Hartogs domain,
then
\begin{enumerate}
  \item $D$ is hyperconvex, i.e.,
        $D$ admits a bounded exhaustive plurisubharmonic function;
  \item the Bergman metric and the K\"ahler-Einstein metric
        on $D$ have bounded geometry,
        i.e., the injective radius have positive lower bound
        and the curvatures are bounded;
  \item the Kobayashi metric, the Carath\'eodory metric,
        the Bergman metric, and the K\"ahler-Einstein metric
        on $D$ are equivalent;
  \item all the intrinsic measures considered in \S \ref{sec:comparison. intrinsic form}
        on $D$ are equivalent.
\end{enumerate}
\end{cor}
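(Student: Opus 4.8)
The strategy is to obtain all four statements from a single input: a Cartan-Hartogs domain $D=\hat{\Omega}_k$ is homogeneous regular, which is part~(2) of Theorem~\ref{thm:Cartan-Hartogs are Homo. Reg.}. So fix a constant $c>0$ with $s_D(P)\geq c$ for every $P\in D$. I also record two background facts about $D$: it is bounded, since the generic norm $N(Z,Z)$ is bounded on the bounded symmetric domain $\Omega$, so $D\subset\Omega\times B^m(0,R)$ for some $R>0$; and it is pseudoconvex --- this follows already from homogeneous regularity, because a homogeneous regular domain has complete Carath\'eodory metric (\cite{DGZ}) and is therefore a domain of holomorphy, and it can also be seen directly by writing $D=\{(Z,W):\|W\|^2<e^{-k\psi(Z)}\}$ with $\psi(Z)=-\log N(Z,Z)$ plurisubharmonic on $\Omega$. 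Pseudoconvexity is what makes the complete K\"ahler--Einstein metric $\cH^{KE}_D$ available, so that Theorem~\ref{thm:comp. Beg. KE metrics} and the concluding theorem of Section~\ref{sec:comparison. intrinsic form} apply to $D$.

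Assertions (3) and (4) are then immediate. By Theorem~\ref{thm:comp. Kob. and Car. metric}, $c\,\cH^K_D\leq\cH^C_D\leq\cH^K_D$, so the Kobayashi and Carath\'eodory metrics are equivalent; by \eqref{eqn:comp. Kob. and Berg.} and \eqref{eqn:comp. Kob. and KE.} of Theorem~\ref{thm:comp. Beg. KE metrics}, with $s_D$ bounded below by $c$, the Bergman metric and the K\"ahler--Einstein metric are each equivalent to the Kobayashi metric; hence all four are pairwise equivalent, which is (3). Assertion (4) is precisely the concluding theorem of Section~\ref{sec:comparison. intrinsic form} applied to the homogeneous regular domain $D$: the measures of these metrics are mutually equivalent and, through Theorem~\ref{thm:Comp. of Car. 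Kob. Vol.}, equivalent to the Carath\'eodory and Eisenman--Kobayashi measures as well.

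For (1) and (2) I would quote the structure theory of homogeneous regular (uniformly squeezing) domains developed in \cite{Yeung}: there it is shown that every homogeneous regular domain is hyperconvex (the completeness of the Carath\'eodory metric yields a bounded negative plurisubharmonic exhaustion by the usual construction) and that the Bergman and K\"ahler--Einstein metrics on such a domain have bounded geometry --- the uniform family of embeddings $f:D\to B^n$ with $B^n(0,c)\subset f(D)\subset B^n$ transfers the bounded geometry of the ball, with base-point-independent constants, via localization estimates for the Bergman kernel and its derivatives. Applying this to $D=\hat{\Omega}_k$ gives (1) and (2).

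The real analytic difficulty is upstream, in Theorem~\ref{thm:Cartan-Hartogs are Homo. Reg.} itself. There one must show, using Theorem~\ref{thm:g.s.c} after a global change of coordinates adapted to the explicit structure of $\hat{\Omega}_k$, that $\lim_{P\to P_0}s_D(P)=1$ for every boundary point $P_0=(Z_0,W_0)$ with $W_0\neq0$ --- note $Z_0$ is automatically interior to $\Omega$ there, since $N(Z_0,Z_0)=\|W_0\|^2>0$, so $\partial D$ is smooth and strongly pseudoconvex near $P_0$ --- and then control the remaining boundary stratum $\partial\Omega\times\{0\}$ by transitivity: $\mathrm{Aut}(D)$ acts transitively on each level set $\{\|W\|^2=tN(Z,Z)^k\}$, $t\in[0,1)$, through $\mathrm{Aut}(\Omega)$ on the base together with unitary rotations in $W$, so $s_D$ descends to a continuous positive function of the single variable $t$ which tends to $1$ as $t\to1$ by the first part, hence is bounded below. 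Against that background the Corollary is pure bookkeeping; the only points within it that need a moment's attention are confirming pseudoconvexity of $\hat{\Omega}_k$ (for $\cH^{KE}_D$ and \eqref{eqn:comp. Kob. and KE.}) and checking that the statements borrowed from \cite{Yeung} are phrased for arbitrary homogeneous regular domains, not under additional boundary-regularity hypotheses.
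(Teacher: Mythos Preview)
Your proposal is correct and matches the paper's approach exactly: the paper does not give a separate proof of this Corollary but simply states, immediately before it, ``Consequently, by the work of Yeung in \cite{Yeung} and the results in \S\ref{sec:comparison. intrinsic form}, we have'' --- i.e., homogeneous regularity (Theorem~\ref{thm:Cartan-Hartogs are Homo. Reg.}) plus Yeung's structure theory plus the comparison theorems of \S\ref{sec:comparison. intrinsic form}. Your write-up is a faithful and more detailed unpacking of precisely this one-line deduction, including the needed check of pseudoconvexity for the K\"ahler--Einstein metric.
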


\begin{proof}(Proof of Theorem \ref{thm:Cartan-Hartogs are Homo. Reg.})
Let $X:\Omega\times\mathbb{C}\rightarrow
\lbrack0,1)$ be defined as
\begin{equation}
X(Z,W)=\frac{\left\Vert W\right\Vert ^{2}}{N(Z,Z)^{k}}-1. \label{EQN7}%
\end{equation}
Then $X$ is a defining function of $\widehat{\Omega}_k$ in $\Omega
\times\mathbb{C}^m$.
%Note that $\hat{\Omega}_k$ is relatively compact in $V\times\mathbb{C}^m$,
%but not in $\Omega\times\mathbb{C}^m$.

We give the proof of the theorem in the case that
$\Omega=D_I(r,s)$ is a bounded symmetric domain
of the first type in the above list.
In this case, $N(Z,Z)=\det(I-Z\bar{Z}^t)$.
Other cases can be proved with the similar argument.

For any point $(Z,W)\in \hat{\Omega}_k$,
there exists an automorphism $f$ of $\hat{\Omega}_k$
such that $f(Z,W)=(0,\cdots,0,a)$ for some $a>0$ (see \cite{Yin2000}).
Assume $\{P_j\}\subset \widehat{\Omega}_k$ with $P_j\rightarrow P_0$ as $j\rightarrow\infty$,
and $\{f_j\}$ are automorphisms of $\widehat{\Omega}_k$ with
$f_j(P_j)=(0,\cdots, 0, a_j)$  ($a_j>0$),
then $\lim_{j\rightarrow \infty} a_j=1$.
By the holomorphic invariance and continuity of
squeezing functions and Theorem \ref{thm:g.s.c text},
it suffices to prove that $(0,\cdots,0,1)$ is a g.s.c boundary point of
$\hat\Omega_k$.
We now compute the real Hessian  $\text{Hess}(X)(0,\cdots,0,1)$ of the defining function $X$ at $(0,\cdots,0,1)$, where $X(Z,W)=\frac{\left\Vert W\right\Vert ^{2}}{N(Z,Z)^{k}}-1$ as above. Let $z_{jk}=x_{jk}+\sqrt{-1}y_{jk}, 1\leq j\leq r, 1\leq k\leq s$, then
$\frac{\partial }{\partial x_{jk}}=\frac{\partial }{\partial z_{jk}}+\frac{\partial }{\partial \bar{z}_{jk}}$ and $\frac{\partial }{\partial y_{jk}}=\sqrt{-1}\left(\frac{\partial }{\partial z_{jk}}-\frac{\partial }{\partial \bar{z}_{jk}}\right)$.
It is clear that
$\frac{\partial N}{\partial z_j}|_{z=0}=\frac{\partial N}{\partial \bar z_j}|_{z=0}$ and
$\frac{\partial^2 N}{\partial z_{jk}\partial z_{lq}}|_{z=0}= \frac{\partial^2 N}{\partial \bar{z}_{jk}\partial \bar{z}_{lq}}|_{z=0}=0$ for all $j,k,l,q$.
Note that $dN(Z,Z)=N(Z,Z)\cdot tr\big((I-Z\bar{Z}^t)^{-1}d(I-Z\bar{Z}^t)\big)$.
Direct calculations show that
\[\left.\frac{\partial^2 N(z,z)}{\partial z_{jk}\partial \bar{z}_{lq}}\right|_{z=0}
%=\left.-N(z,z)tr\left((I-z\bar{z}^t)^{-1}E_{\alpha\beta}E_{\gamma\delta}^t\right)\right|_{z=0}
=-tr\left(E_{jk}E_{lq}^t\right)=\left\{\begin{array}{cc}
                                                            -1, & j=l,k=q; \\
                                                            0, &  \text{otherwise,}
                                                          \end{array}
 \right.\]
 where $E_{jk}$ denotes a $(r\times s)$-matrix whose components are non vanishing only at the $(j,k)$ position.
 Therefore, we get
\[\text{Hess}(X)(0,\cdots,0,1)
=
\left(
  \begin{array}{cc}
    2kI_{2rs} & 0\\
    0 & 2I_{2m}
  \end{array}
\right).\]
Note also that $\nabla X(0,\cdots,0,1)=2\frac{\partial}{\partial u_m}\neq 0$, where $u_m$ is the real part of $w_m$, hence $(0,\cdots,0,1)$
is a globally strongly convex boundary point of $\hat\Omega_k$. On the other hand, it is clear that $\bar{\hat\Omega}_k\cap\{u_m=1\}=\{(0,\cdots,0,1)\}$, so
$(0,\cdots,0,1)$ is a g.s.c boundary point of $\hat\Omega_k$. This completes the proof of the theorem.
\end{proof}

\begin{rem}
For $k$ tends to 0, the sequence of  domains $\hat\Omega_k$ increases to the product domain $\Omega\times B^m.$
By Theorem \ref{thm: increase limit domain} in the present paper and  Theorem 7.3 and Theorem 7.4 in \cite{DGZ},
we have
$$\lim_{k\rightarrow 0}s_{\hat\Omega_k}(Z,W)=s_{\Omega\times B^m}(Z,W)=(1+c_\Omega)^{-1/2},$$
for all $(Z,W)\in\hat\Omega_k$,
where $c_\Omega=r,$ $ p,$ $ [q/2],$ $ 2$ for $\Omega=D_I(r,s),$ $D_{II}(p,q),$ $D_{III}(q),$ $D_{IV}(n)$ respectively.
\end{rem}

\subsection{Geometry of Strongly pseudoconvex domains}\label{subsec:geo. s.p.c domains}
In this subsection, we prove some results about geometry of strongly pseudoconvex domains.
Those results are well known and play important roles in several complex variables.
But we want to show that they are direct consequences of the results in previous sections.
Throughout this subsection, we always assume that $D$ is a bounded strongly pseudoconvex domain
in $\mathbb{C}^n$ with $C^2$ boundary.

\begin{cor}
The Carath\'eodory metric, the Kobayashi metric, the Bergman metric,
and the K\"ahler-Einstein metric on $D$ are equivalent;
and the Bergman/K\"ahler-Einstein metric on $D$ have bounded geometry
(i.e., the curvature is bounded and the injective radius is bounded by a positive constant).
\end{cor}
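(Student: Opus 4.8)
The plan is to deduce everything from Theorem~\ref{thm:s.p.c} (equivalently Theorem~\ref{thm:h.g.s.c text}), which asserts that $\lim_{z\to\partial D}s_D(z)=1$ for a bounded strongly pseudoconvex domain with $C^2$ boundary. First I would note that, since $s_D$ is continuous and strictly positive on $D$ and tends to $1$ as $z\to\partial D$, there is a constant $c\in(0,1]$ with $s_D(z)\ge c$ for all $z\in D$; in other words $D$ is homogeneous regular. This is the only place where the boundary estimate is used, and from here the argument is purely formal, combining the comparison results of \S\ref{sec:comparison. intrinsic form}.

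Next, applying Theorem~\ref{thm:comp. Kob. and Car. metric} with the uniform bound $s_D(z)\ge c$ gives $c\,\cH^C_D\le\cH^C_D\le\cH^K_D$ — more precisely $c\,\cH^K_D\le\cH^C_D\le\cH^K_D$ — so the Carath\'eodory and Kobayashi metrics are equivalent. Since $D$ is strongly pseudoconvex, hence pseudoconvex, Theorem~\ref{thm:comp. Beg. KE metrics} applies as well: inequality \eqref{eqn:comp. Kob. and Berg.} yields
\[
c\,\cH^K_D(z)\leqslant \cH^B_D(z)\leqslant \frac{2^{n+2}\pi}{c^{n+1}}\,\cH^K_D(z),
\]
and \eqref{eqn:comp. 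Kob. and KE.} yields
\[
\sqrt{\tfrac{1}{n}}\,c\,\cH^K_D(z)\leqslant \cH^{KE}_D(z)\leqslant \left(\tfrac{n}{c^2}\right)^{(n-1)/2}\cH^K_D(z).
\]
Thus the Bergman metric and the K\"ahler--Einstein metric are each equivalent to the Kobayashi metric, and by transitivity all four metrics are mutually equivalent on $D$.

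For the bounded geometry assertion I would appeal directly to the results on homogeneous regular (uniformly squeezing) domains in \cite{Yeung}: there it is shown that on such a domain the Bergman and K\"ahler--Einstein metrics have bounded curvature tensors and injectivity radius bounded below by a positive constant. Having just shown that $D$ is homogeneous regular, these conclusions apply to $D$ verbatim.

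There is no genuinely hard step internal to this corollary — all the weight has been shifted into Theorem~\ref{thm:s.p.c} (which itself rests on the deep Forn\ae ss--Wold embedding theorem, Theorem~\ref{thm:h.g.s.c}) and into the comparison theorems of the previous section. The only points requiring a little care are: (i) the comparison estimates must be invoked with the \emph{global} lower bound $c$ in place of the pointwise value $s_D(z)$, which is legitimate precisely because homogeneous regularity supplies such a bound; and (ii) one should remember that it is strong pseudoconvexity — not merely pseudoconvexity — that makes the complete K\"ahler--Einstein metric a smooth metric whose curvature and injectivity radius can be meaningfully controlled, so that the phrase ``bounded geometry'' is well posed.
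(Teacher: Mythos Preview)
Your argument is correct and follows exactly the paper's own proof: use Theorem~\ref{thm:s.p.c} to conclude that $D$ is homogeneous regular, then invoke Theorem~\ref{thm:comp. Kob. and Car. metric} and Theorem~\ref{thm:comp. Beg. KE metrics} for the equivalence of the four metrics, and cite \cite{Yeung} for bounded geometry. The only slight inaccuracy is your parenthetical remark (ii): the existence and smoothness of $\cH^{KE}_D$ require only pseudoconvexity (Mok--Yau), and the bounded-geometry conclusion comes from homogeneous regularity via \cite{Yeung}, not from strong pseudoconvexity directly.
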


\begin{proof}
By Theorem \ref{thm:s.p.c}, $D$ is homogenous regular.
So the corollary follows from Theorem \ref{thm:comp. Kob. and Car. metric}
and Theorem \ref{thm:comp. Beg. KE metrics} in \S \ref{sec:comparison. intrinsic form}
and Theorem 2 in \cite{Yeung}.
\end{proof}

\begin{cor}\label{cor:comp. metric. s.p.c}
Denote the Carath\'eodory metric, the Kobayashi metric, the Bergman metric on $D$
by $\mathcal H^C_D$, $\mathcal H^K_D$, and $\mathcal H^B_D$ respectively.
Then
\begin{enumerate}
\item the metrics admit the following comparisons near the boundary:
$$\lim_{z\rightarrow \partial D}\frac{\mathcal H^K_D(z)}{\mathcal H^C_D(z)}=
\sqrt{n+1}\lim_{z\rightarrow \partial D}\frac{\mathcal H^K_D(z)}{\mathcal H^B_D(z)}=1;$$
\item the sectional curvature of the Bergman metric on $D$ tends to $-\frac{4}{n+1}$ asymptotically near the boundary.
\end{enumerate}
\end{cor}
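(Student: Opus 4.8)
The plan is to derive Corollary~\ref{cor:comp. metric. s.p.c} from the boundary estimate $\lim_{z\to\partial D}s_D(z)=1$ (Theorem~\ref{thm:s.p.c}) together with the squeezing-function comparisons of \S\ref{sec:comparison. intrinsic form}. For part (1), recall Theorem~\ref{thm:comp. Kob. and Car. metric} gives $s_D(z)\mathcal H^K_D(z)\le \mathcal H^C_D(z)\le \mathcal H^K_D(z)$, so
\[
s_D(z)\le \frac{\mathcal H^C_D(z)}{\mathcal H^K_D(z)}\le 1,
\]
and letting $z\to\partial D$ and using $s_D(z)\to 1$ forces $\mathcal H^C_D(z)/\mathcal H^K_D(z)\to 1$, i.e. $\mathcal H^K_D(z)/\mathcal H^C_D(z)\to 1$. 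Similarly, from \eqref{eqn:comp. Kob. and Berg.} in Theorem~\ref{thm:comp. Beg. KE metrics} one has $s_D(z)\mathcal H^K_D(z)\le \mathcal H^B_D(z)\le 2^{n+2}\pi\, s_D(z)^{-(n+1)}\mathcal H^K_D(z)$, which pins $\mathcal H^B_D(z)/\mathcal H^K_D(z)$ only between $s_D(z)$ and a large constant multiple of $s_D(z)^{-(n+1)}$; this crude bound is \emph{not} sharp enough to yield the factor $\sqrt{n+1}$. So the Bergman-metric part of (1) must instead be extracted from the precise boundary asymptotics of the Bergman metric: Fefferman's (and, for $C^2$ boundary, the relevant refinements of the) boundary expansion shows that, after normalizing, $\mathcal H^B_D(z)$ behaves near $\partial D$ like $\sqrt{n+1}$ times the Bergman metric of the ball scaled to the defining function, while the Kobayashi metric's boundary behavior (Graham's estimates) matches that of the ball; dividing, the ratio $\mathcal H^K_D(z)/\mathcal H^B_D(z)\to 1/\sqrt{n+1}$, which is the claimed identity. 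Concretely I would invoke the known result that, near a strongly pseudoconvex boundary point, all these metrics are asymptotic to the corresponding metric on the model (the ball), with the Bergman metric carrying the extra universal factor $\sqrt{n+1}$ because on $B^n$ one has $\mathcal H^B_{B^n}=\sqrt{n+1}\,\mathcal H^K_{B^n}$.

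For part (2), the statement that the holomorphic sectional curvature of the Bergman metric tends to $-\tfrac{4}{n+1}$ near $\partial D$ is again a localization/rescaling argument: near a strongly pseudoconvex boundary point one rescales (the Pinchuk stretching / scaling method) so that $D$ converges in the local Hausdorff sense to the unit ball $B^n$, and the Bergman metric together with its curvature tensor converges (in $C^\infty_{loc}$, using stability of the Bergman kernel under such convergence, cf. the Bergman-kernel asymptotics used above) to those of $B^n$. Since the Bergman metric on $B^n$ (normalized as here so that $\mathcal H^B_{B^n}=\sqrt{n+1}\,\mathcal H^K_{B^n}$, i.e. with Ricci $-(n+1)$... more precisely with the standard normalization on the ball) has constant holomorphic sectional curvature equal to $-\tfrac{4}{n+1}$, the limit of the curvature of $\mathcal H^B_D$ along any sequence approaching $\partial D$ is $-\tfrac{4}{n+1}$. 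Alternatively, and perhaps more in the spirit of the paper, one combines the equivalence $\mathcal H^B_D\sim\mathcal H^K_D$ near $\partial D$ from (1) with Klembeck's classical theorem, which is exactly the assertion that the Bergman metric of a bounded strongly pseudoconvex domain has holomorphic sectional curvature approaching that of the ball at the boundary; here Theorem~\ref{thm:s.p.c} and the comparison theorems furnish a streamlined route to the curvature-convergence input.

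The main obstacle is the precise constant $\sqrt{n+1}$ in part (1) and the constant $-\tfrac{4}{n+1}$ in part (2): the squeezing-function inequalities by themselves only give boundedness and two-sided estimates with non-sharp constants, so one genuinely needs the model computation on the ball plus a convergence statement (scaling method or Bergman-kernel boundary asymptotics) to identify the limit exactly. Once that convergence to the ball model is invoked, both assertions reduce to the elementary facts that on $B^n$ the Bergman metric equals $\sqrt{n+1}$ times the Kobayashi metric and has constant holomorphic sectional curvature $-\tfrac{4}{n+1}$, and that the Kobayashi metric is localizable at strongly pseudoconvex points; combining these with Theorem~\ref{thm:s.p.c} and Theorems~\ref{thm:comp. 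Kob. and Car. metric}--\ref{thm:comp. Beg. KE metrics} gives both parts.
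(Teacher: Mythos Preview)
Your treatment of the Carath\'eodory/Kobayashi ratio matches the paper exactly. The divergence is in how you handle the Bergman metric and the curvature.

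The paper does \emph{not} invoke Fefferman's asymptotic expansion, Graham's boundary estimates, Pinchuk scaling, or Klembeck's theorem. Instead it stays entirely within the squeezing-function framework: given $z_j\to\partial D$, Theorem~\ref{thm:s.p.c} yields $s_D(z_j)\to 1$, hence extremal embeddings $f_j:D\to B^n$ with $f_j(z_j)=0$ and $f_j(D)$ exhausting $B^n$. The diagonal Bergman kernels $K_j$ of $f_j(D)$ then converge locally uniformly to the Bergman kernel $K_0$ of $B^n$ (Ramadanov-type stability), and because each $K_j$ is a sum of squared moduli of holomorphic functions, Cauchy's inequality upgrades this to convergence of all derivatives. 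Hence the Bergman metric at $0$, and its curvature tensor, converge to those of $B^n$. Transporting back by holomorphic invariance gives both the $\sqrt{n+1}$ factor in (1) and the curvature limit $-4/(n+1)$ in (2).

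Your route via Fefferman/Graham is logically valid but imports much heavier machinery than the paper uses; the explicit purpose of \S\ref{subsec:geo. s.p.c domains} is to give \emph{new and simple} proofs of these classical facts from the squeezing function alone, so appealing to Fefferman's expansion undercuts the point. More seriously, your ``alternative'' for part (2) --- invoking Klembeck's theorem --- is circular: Klembeck's theorem \emph{is} the statement of part (2), so it cannot be used as input. Your scaling suggestion is morally the same phenomenon the paper exploits, but the paper's implementation (extremal maps from the squeezing function plus Bergman-kernel stability on exhausting domains) is both more elementary and self-contained, requiring nothing beyond Cauchy's inequality and the standard Ramadanov convergence.
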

\begin{proof}
By Theorem \ref{thm:comp. Kob. and Car. metric} and Theorem \ref{thm:s.p.c},
we have $\lim_{z\rightarrow \partial D}\frac{\mathcal H^K_D(z)}{\mathcal H^C_D(z)}=1.$
Let $z_n$ be a sequence in $D$ going to $\partial D$.
By Theorem \ref{thm:s.p.c}, $\lim_{n\rightarrow\infty}s_D(z_n)=1$.
So there exists a sequence of injective holomorphic maps $f_n:D\rightarrow B^n$ with $f_n(z_n)=0$,
$f_n(D)\subset f_{n+1}(D)$ and $\bigcup_{n}f_n(D)=B_n$.
Let $K_n(z, \bar z)$ be the diagonal Bergman kernel of $f_n(D)$
and $K_0$ be the diagonal Bergman kernel of $B^n$.
It is well known that $K_n$ converges to $K_0$ uniformly on compact
subsets of $B_n$ (see e.g. \cite{Pflug}).
Since $K_n$ and $K_0$ can be written as sums of the norms of holomorphic functions,
by Cauchy's inequality, the derivatives of $K_n$ still converges to the derivatives of $K_0$
uniformly on compact subsets of $B^n$.
In particular, the Bergman metrics on $f_n(D)$ at $0$ converges to the Bergman metric
on $B_n$ at 0.
By the holomorphic invariance of the Kobayashi metric and the Bergman metric,
we see $$\lim_{n\rightarrow \infty}\frac{\mathcal H^K_D(z_n)}{\mathcal H^B_D(z_n)}=
\lim_{n\rightarrow \infty}\frac{\mathcal H^K_{f_n(D)}(0)}{\mathcal H^B_{f_n(D)}(0)}=
\frac{\mathcal H^K_{B^n}(0)}{\mathcal H^B_{B^n}(0)}=(n+1)^{-1/2}.$$
Note that the curvature can be expressed in terms of second order derivatives of the Bergman metric on $D$,
and the sectional curvature of the unit ball w.r.t the Bergman metric is $-\frac{4}{n+1}$.
By the holomorphic invariance of Bergman metric,
we see that the sectional curvature of the Bergman metric on $D$ tends to $-\frac{4}{n+1}$ asymptotically near the boundary.
\end{proof}

\begin{cor}
Let $\mathcal{M}$ and $\mathcal{M}'$ be any two of the five measures, i.e.,
the Carath\'eodory measure, the Eisenman-Kobayashi measure,
the measure of the Carath\'eodory metric,
the measure of the Kobayashi metric, and $(n+1)^{-n}$ times of the measure of the Bergman metric on $D$.
Then
$$\lim_{z\rightarrow\partial D}\frac{\mathcal M(z)}{\mathcal M'(z)}=1.$$
\end{cor}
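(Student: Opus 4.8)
The plan is to separate the five measures into the four that obey the decreasing property and the Bergman one. If $\cM,\cM'$ are any two of the Carath\'eodory measure, the Eisenman--Kobayashi measure, the measure of the Carath\'eodory metric, and the measure of the Kobayashi metric, then Theorem~\ref{thm:Comp. of Car. Kob. Vol.} gives
\[
s_D^{2n}(z)\leq \frac{\cM_D(z)}{\cM'_D(z)}\leq \frac{1}{s_D^{2n}(z)},\qquad z\in D,
\]
and Theorem~\ref{thm:s.p.c} yields $s_D(z)\to1$ as $z\to\partial D$, so $\cM_D(z)/\cM'_D(z)\to1$. Since the relation ``$\lim_{z\to\partial D}\cM(z)/\cM'(z)=1$'' is transitive, it remains to prove the statement for a single pair involving the Bergman measure; I take $\cM=(n+1)^{-n}\cM^{\cH^B}_D$ and $\cM'=\tilde{\cM}^K_D$, the measure of the Kobayashi metric.

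For this pair the decreasing property is unavailable, so I would reuse the renormalization argument from the proof of Corollary~\ref{cor:comp. metric. s.p.c}. Given $z$ near $\partial D$, choose an extremal injective holomorphic map $f_z:D\to B^n$ with $f_z(z)=0$ and $B^n(0,s_D(z))\subset f_z(D)\subset B^n$. The Kobayashi and Bergman metrics are biholomorphic invariants, hence so are their measures (the Jacobian factor picked up by the $(n,n)$-form is cancelled by the one picked up by the Euclidean volume of the unit ball of the norm), so
\[
\frac{\cM(z)}{\cM'(z)}=\frac{(n+1)^{-n}\,\cM^{\cH^B}_{f_z(D)}(0)}{\tilde{\cM}^K_{f_z(D)}(0)}.
\]
Writing $s=s_D(z)$, it is therefore enough to show $\cM^{\cH^B}_{f_z(D)}(0)\to\cM^{\cH^B}_{B^n}(0)$ and $\tilde{\cM}^K_{f_z(D)}(0)\to\tilde{\cM}^K_{B^n}(0)$ as $s\to1$.

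The Kobayashi part is immediate from the decreasing property: from $B^n(0,s)\subset f_z(D)\subset B^n$ one gets $\cH^K_{B^n}(0,v)\leq\cH^K_{f_z(D)}(0,v)\leq s^{-1}\cH^K_{B^n}(0,v)$ for every $v\neq0$, so the unit balls of these norms, and hence their volumes, converge. The Bergman part is handled exactly as in Corollary~\ref{cor:comp. metric. s.p.c}: on $B^n(0,s)$ the diagonal Bergman kernel of $f_z(D)$ lies between $K_{B^n}$ and $K_{B^n(0,s)}$, and $K_{B^n(0,s)}\to K_{B^n}$ locally uniformly on $B^n$ as $s\to1$, so $K_{f_z(D)}\to K_{B^n}$ locally uniformly; by Cauchy's inequality applied to the function $K_{f_z(D)}(w,\bar\zeta)$, which is holomorphic in $w$ and anti-holomorphic in $\zeta$, the first and second derivatives entering the metric converge too, whence the Bergman metric tensor of $f_z(D)$ at $0$ tends to $\cH^B_{B^n}(0)$ and the volume of its unit ball converges. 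Since on $B^n$ the Bergman metric equals $\sqrt{n+1}$ times the Kobayashi metric (as recorded in Corollary~\ref{cor:comp. metric. s.p.c}), we have $(n+1)^{-n}\cM^{\cH^B}_{B^n}(0)=\tilde{\cM}^K_{B^n}(0)$, and the displayed ratio tends to $1$.

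The main obstacle is precisely this last pair. The Bergman measure fails the decreasing property, so Theorem~\ref{thm:Comp. of Car. Kob. Vol.} cannot be applied directly, and the estimate obtainable from Theorem~\ref{thm:comp. Beg. KE metrics} is of order $s_D^{-2n(n+1)}$ on one side, which does not converge to $1$. One genuinely needs the finer fact that, after renormalizing biholomorphically so that $z$ goes to $0$, the whole Bergman metric tensor (not merely the metric in one fixed direction) converges to that of the unit ball, which is the same refinement already carried out in \S\ref{subsec:geo. s.p.c domains}.
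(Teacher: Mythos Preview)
Your proof is correct and follows the same strategy as the paper, which records only that the corollary is a combination of Theorem~\ref{thm:s.p.c}, Theorem~\ref{thm:Comp. of Car. Kob. Vol.}, and Corollary~\ref{cor:comp. metric. s.p.c}. Your handling of the Bergman measure via the sandwich $K_{B^n}\le K_{f_z(D)}\le K_{B^n(0,s)}$ is a minor variant of the monotone exhaustion argument used in the proof of Corollary~\ref{cor:comp. metric. s.p.c}, but the underlying renormalization idea is the same.
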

\begin{proof}
This corollary is derived from a combination of Theorem \ref{thm:s.p.c}, Theorem \ref{thm:Comp. of Car. Kob. Vol.},
and Corollary \ref{cor:comp. metric. s.p.c}.
\end{proof}

\section{Further study}\label{sec:further study}
In this section, we propose some directions related to the topics in the present paper for further study.
\subsection{Potential application in algebraic geometry}
Squeezing functions are originally defined on bounded domains,
which are the classical objects of study in several complex variables.
It is natural to consider whether the theory of squeezing functions can be applied to
more general complex manifolds.
Let $X$ be a complex manifold whose universal covering is $\pi:\tilde X\rightarrow X$.
If $\tilde X$ is biholomorphic to a bounded domain, then $s_{\tilde X}$ is defined.
By the holomorphic invariance of squeezing functions,
$s_{\tilde X}$ can be pushed down to a continuous function on $X$.
If $X$ is compact, ${\tilde X}$ is homogenous regular.
This implies a lot of interesting informations about the geometry of $X$,
as shown in \cite{Yeung}.
However, for generic compact complex manifolds,
their universal covering can not be holomorphic equivalent to a bounded domain.
So the application of this approach is much restricted.
On the other hand,
if restricting ourselves to the context of projective manifolds,
we can say more as follows.

By the uniformization theorem,
the universal covering of a  Riemann surface is either
$\mathbb P^1$, $\mathbb C$ or $\Delta$.
In higher dimensions, there is no similar perfect phenomenon.
On the other hand,
based on Bers theory (\cite{Bers68}),
Griffiths showed that any point $z$ in a projective manifold $X$ admits
a Zariski open neighborhood $U$ such that the universal covering
$\tilde U$ of $U$ is biholomorphic to a contractible bounded domain in $\mathbb C^n$ (\cite{Griffiths71}).
Such a neighborhood $U$ of $z$ is called a Griffiths neighborhood of $z$.
We define
$$s_X(z)=\sup_U\{s_{\tilde U}(\tilde z)\},$$
where the supremum is taken over all Griffiths neighborhoods $U$ of $z$,
and $\tilde z\in\tilde U$ is an inverse image of $z$ under the covering map
from $\tilde U$ to $U$.
As $z$ varies on $X$, we get a function $s_X$ on $X$,
which is also called \emph{the squeezing function} of $X$.
It is clear that $s_X$ is a positive function on $X$ which is invariant under holomorphic transformations.
As in the case of bounded domains,
we need to consider two key problems in the context of projective manifolds.
We need to study which algebraic geometric properties of a projective manifold are encoded in
its squeezing function, and develop methods to estimate squeezing functions on projective manifolds.
In the special case that $X$ is a Riemann surface,
it is clear that $s_X\equiv 1$,
which is nothing but the Riemann Mapping Theorem.
If $X$ is homogenous, then $s_X$ is a constant,
which is a holomorphic invariant of $X$.
Even in the case that $X=\mathbb P^n$, the projective space,
it seems nontrivial to determine the exact value of $s_X$.
If $X$ is a ball quotient,
then $s_X=1$.
It is also interesting to consider possible gap phenomenon.
Namely, for each fixed $n>1$ and $0<r<1$, is there a
projective manifold $X$ of dimension $n$ such that $r$
is the exact lower bound of $s_X$?

\subsection{Holomorphic transformations of strongly pseudoconvex domains.}
Let $D$ be a strongly pseudoconvex bounded domain in $\mathbb C^n$ and $p\in\partial D$.
By definition, a \emph{peak function} on $D$ at $p$ is a holomorphic function $h$ on $\bar D$
(i.e., $h$ is holomorphic on some neighborhood of $\bar D$) such that $h(p)=1$ and
$|h(z)|< 1, z\in\bar D-\{p\}.$
Given Forn{\ae}ss and Wold's result (Theorem \ref{thm:h.g.s.c}),
one can easily recover the well known result that each boundary point of $D$ admits a peak function.
We want to consider the theory of peak functions and globally strongly convexity more carefully.
In each side, there have three levels:
\begin{itemize}
\item \emph{Level One: pointwise existence}.

\emph{Assumption}: Let $D\subset\mathbb C^n$ be a strongly pseudoconvex domain with $C^k (k\geq 2)$ boundary.
Assume $p$ is an arbitrary boundary point of $D$.

\emph{Purpose}: to prove the existence of
\begin{enumerate}
\item a peak function on $D$ at $p$; and
\item a holomorphic injective map $f_p:\bar D\rightarrow\mathbb C^n$ such that
$f_p(p)$ is a g.s.c boundary point of $f_p(D)$.
\end{enumerate}
\item \emph{Level Two: variation with respect to boundary points}.

\emph{Assumption}: Let $D\subset\mathbb C^n$ be a strongly pseudoconvex domain with $C^k (k\geq 2)$ boundary.

\emph{Purpose}:
to prove the existence of
\begin{enumerate}
\item a $C^{k-2}$ map $H:\partial D\times \bar D\rightarrow \mathbb C$
such that, for each $p\in\partial D$, $h_p:=H(p,\cdot):\bar D\rightarrow\mathbb C$
is a peak function on $D$ at $p$; and
\item a $C^{k-2}$ map $F:\partial D\times \bar D\rightarrow \mathbb C^n$
such that, for each $p\in\partial D$, $f_p:=F(p,\cdot):\bar D\rightarrow\mathbb C^n$
is holomorphic and injective such that
$f_p(p)$ is a g.s.c boundary point of $f_p(D)$.
\end{enumerate}

\item \emph{Level Three: variation over a family}.

\emph{Assumption}:
Denote by $\Delta$ the unit disc.
Let $\rho:\Delta\times\mathbb C^n\rightarrow\mathbb R$ be a $C^k$-smooth p.s.h function ($k\geq 2$)
whose restriction $\rho_t$ on each fiber $\{t\}\times\mathbb C^n$
is strictly plurisubharmonic.
Let $\mathcal D=\{(t,z)\in\Delta\times\mathbb C^n; \rho(t,z)<0\}$,
and let $\pi:\mathcal D\rightarrow \Delta$ be the natural projection.
Then $\mathcal D$ can be viewed as a family of strongly pseudoconvex domains
over $\Delta$.
We denote by $D_t$ the fiber over $t$ given by $\mathcal D\cap(\{t\}\times\mathbb C^n)$.
Let $\partial^f\mathcal D=\cup_{t\in\Delta}\partial D_t$,
and $\bar{\mathcal D}^f=\mathcal D\cup \partial^f\mathcal D$.

\emph{Purpose:}
to prove the existence of
\begin{enumerate}
\item a $C^{k-2}$ map $\mathcal H:\partial^f\mathcal D\times_\pi\bar{\mathcal D}^f\rightarrow\mathbb C$
such that for each $p\in\partial D_t\subset\partial^f\mathcal D$,
$h_{p,t}:=\mathcal H(p,\cdot):\bar D_t\rightarrow\mathbb C$ is holomorphic and gives a peak function on $D_t$ at $p$; and
\item a $C^{k-2}$ map $\mathcal F:\partial^f\mathcal D\times_\pi\bar{\mathcal D}^f\rightarrow\mathbb C^n$
such that for each $p\in\partial D_t\subset\partial^f\mathcal D$,
$f_{p,t}:=\mathcal F(p,\cdot):\bar D_t\rightarrow\mathbb C^n$ is holomorphic and injective,
and $f_{p,t}(p)$ is a g.s.c boundary point of $f_{p,t}(D_t)$.

\end{enumerate}
\end{itemize}

As we have seen, Lever One was established by Forn{\ae}ss and Wold.
The existence of $H$ in Level Two is also known,
which plays an important role in the study of geometry of
strongly pseudoconvex domains (see \cite{Graham75}).

\end{document}